\numberwithin{equation}{section}
\theoremstyle{definition}
\newtheorem{thm}[equation]{Theorem} 
\theoremstyle{definition}
\newtheorem{prop}[equation]{Proposition} 
\newtheorem{cor}[equation]{Corollary}
\newcommand{\dist}{{\text {dist}}}
\newcommand{\ovl}[1]{\overline{#1}}
\newcommand{\pair}[1]{\langle #1\rangle}
\newcommand{\ppair}[1]{\left\langle #1\right\rangle}
\newcommand{\pr}[1]{\left(#1\right)}
\newcommand{\set}[1]{\left\{#1\right\}}
\newcommand{\abs}[1]{\left|#1\right|}
\newcommand{\bb}[1]{\mathbb{#1}} \newcommand{\td}[1]{\widetilde{#1}}
\def\fnum{equation}
\newtheorem{Thm}[\fnum]{Theorem}
\newtheorem{Lem}[\fnum]{Lemma}
\newtheorem{Pro}[\fnum]{Proposition}
\newcommand{\RR}{\mathbb R}
\newcommand{\D}{\Delta}
\newcommand{\n}{\nabla}
\newcommand{\ep}{\varepsilon}
\newcommand{\dL}{\mathcal L}
\newcommand{\sbst}{\subseteq}
\newcommand{\h}{\textbf H}
\newcommand{\na}{\nabla}
\newcommand{\la}{\langle}
\newcommand{\ra}{\rangle}
\newcommand{\bd}{\partial}
\newcommand{\supp}{\text{supp}}
\newcommand{\N}{\textbf{n}}
\newcommand{\p}{\mathbf p}
\title[MCF with asymptotically conical singularities]
{Closed mean curvature flows with \\asymptotically conical singularities}
\author{Tang-Kai Lee, Xinrui Zhao}
\address{MIT, Dept. of Math., 77 Massachusetts Avenue, Cambridge, MA 02139-4307}
\email{tangkai@mit.edu and xrzhao@mit.edu}
\date{\today}
\begin{document}
	\begin{abstract}
	In this paper, we prove that for any asymptotically conical self-shrinker, there exists an embedded closed hypersurface such that the mean curvature flow starting from it develops a singularity modeled on the given shrinker. 
    The main technique is the Wa\.zewski box argument, used by Stolarski in the proof of the corresponding theorem in the Ricci flow case.
    As a corollary, our construction, combined with the works of Angenent--Ilmanen--Velázquez and Chodosh--Daniels-Holgate--Schulze, implies the existence of fattening level set flows starting from smooth embedded closed hypersurfaces. 
    These provide examples related to a question asked by Evans--Spruck.
	\end{abstract}

\maketitle

\vspace{-1cm}

\section{\bf Introduction}

We consider a family of immersions $\mathbf{x}\colon M^n\times I\to \bb R^{n+k}$ such that 
$\bd_t \mathbf{x} = \mathbf{H}$
where $\mathbf{H}$ is the mean curvature vector of the immersed submanifold $M_t = \mathbf{x}(M,t).$ 
This family of immersions is called a mean curvature flow. 
Understanding the singularity formations in a mean curvature flow is a fundamental question. 
More precisely, given a mean curvature flow $M_t$ developing a spacetime singularity at $(\textbf 0,0)\in\bb R^{n+k}\times\bb R,$ we can consider a sequence of rescaled mean curvature flows 
$M^i_t:= \lambda_i M_{\lambda_i^{-2}t}$ 
using any sequence $\lambda_i\to\infty.$
Based on Huisken's monotonicity~\cite{H90} and Brakke's compactness \cite{B78}, White \cite{W94} and Ilmanen \cite{I95} proved that $M_t^i$ weakly converges to a self-similar limit flow, called a \textit{tangent flow}, after replacing $\lambda_i$'s with a subsequence.
When the limit flow is smooth, it is generated by a \textit{self-shrinker}, a submanifold satisfying $\h=-\frac{x^\perp}{2}.$ 
Uniqueness of tangent flows, a fundamental issue for singularity analysis, has been discussed for certain shrinkers \cite{S14, CM15, CM19, Z20, CS, LSS, S23, LZ, CSSZ}. 
For related references on uniqueness of tangent cones of minimal submanifolds, one can see \cite{AA, S83, H97, S08, CM14}.

In this paper, we focus on singularities modeled on asymptotically conical self-shrinkers of codimension one. 
An \textit{asymptotically conical} self-shrinker is a self-shrinker that is asymptotic to a regular cone at infinity. 
They are especially important in understanding two-dimensional self-shrinkers; cf.~\cite{W16}. 
Moreover, given any cone in $\bb R^{n+k}$, there exists at most one asymptotically conical self-shrinker asymptotic to it even in the case that the codimension is higher than one \cite{W14,Kh}. 
Also, asymptotically conical self-shrinkers as tangent flows are unique, proven by Chodosh-Schulze \cite{CS} in the hypersurface case and the authors \cite{LZ} in the higher codimensional case.

A fundamental question on the singularity formation of mean curvature flows is whether there is a closed hypersurface that develops a singularity modeled on an asymptotically conical shrinker. 
Note that an asymptotically conical shrinker is always non-compact; so is the self-shrinking solution modeled on it.
In general, it is not easy to determine the precise type of singularities developed along a mean curvature flow. 
Thus, although there are some examples of asymptotically conical shrinkers \cite{N14, KKM}, a construction of a closed hypersurface flow developing such a singularity has not been given before, though it is expected. 
In this paper, we prove the following theorem and confirm this expectation.

\begin{Thm}\label{main}
    Given an asymptotically conical shrinker $\Sigma^n\subset \RR^{n+1}$, there exists an embedded closed hypersurface $M\subset \RR^{n+1}$ such that the mean curvature flow $M_t$ starting from $M$ develops a type I singularity at time $1$ at the origin modeled on $\Sigma$.
\end{Thm}

This confirms the belief in the existence of closed mean curvature flows having asymptotically conical tangent flows.
This finding has applications, drawing on previous research. 
For instance, Chodosh--Choi--Mantoulidis--Schulze \cite{CCMS} proved that for a closed mean curvature flow with an asymptotically conical singularity, a generic initial perturbation ensures the perturbed flow avoids the asymptotically conical singularity. 
Their theorem indicates that encountering an asymptotically conical singularity in closed mean curvature flows is unlikely. 
Indeed, it is anticipated that all non-cylindrical singularities are not generic, a conjecture confirmed for surfaces \cite{CCMS, CCS, BK23}.
Another method addressing this issue, based on local dynamics, was explored by Sun--Xue \cite{SX1, SX2}.

Another immediate consequence of Theorem~\ref{main} is noteworthy. 
In \cite{ES}, Evans--Spruck conjectured that any level set flow starting from a smooth hypersurface does not fatten (cf. the final sentence in \cite{ES}). 
Based on the construction by Angenent--Ilmanen--Velázquez \cite{AIV} and the fattening criterion proven by Chodosh--Daniels-Holgate--Schulze \cite{CDS}, Theorem~\ref{main} implies the following corollary. 
It provides a negative answer to the question raised by Evans--Spruck (cf. \cite{De, I94, AIC}) by showing that fattening does not necessarily happen instantaneously.

\begin{cor}\label{cor:fatten}
    There exists a fattening level set flow starting from an $n$-dimensional smooth embedded closed hypersurface for $n\in\set{3,4,5,6}.$
    In particular, the flow remains smooth for a short time before fattening occurs.
\end{cor}

In \cite{W02}, White announced a result joint with Ilmanen about the existence of a compact smooth embedded surface in $\bb R^3$ for which the uniqueness of enhanced varifold solutions of mean curvature evolution fails.
He described the proof ideas in \cite[Section~4]{W02}, which are completely different from our approach to Corollary~\ref{cor:fatten}.
In a recent work \cite{IW}, Ilmanen--White improved the result and showed that for any large enough $g,$ there exists a smooth embedded closed surface in $\bb R^3$ of genus $g$ such that the level set flow starting from it develops an asymptotically conical singularity (with two ends) and fattens after that singular time.
Their work was based on the resolution of the multiplicity one conjecture by Bamler--Kleiner~\cite{BK23}.
Combining their results with Corollary~\ref{cor:fatten}, we now have $n$-dimensional embedded closed fattening examples for $n\in\set{2,3,4,5,6}.$

There was another recent work by Ketover~\cite{K24}, where he used min-max methods to construct a genus $g$ asymptotically conical shrinker with two ends and an entropy estimate.
Furthermore, the level set flow starting from the asymptotic cone of the shrinker fattens when $g$ is large.
As pointed out in \cite{IW}, combining Ketover's result with Theorem~\ref{main} and arguing as in the proof of Corollary~\ref{cor:fatten} leads to other examples of two-dimensional fattening level set flows starting from smooth embedded closed surfaces.

We briefly mention the strategy to prove Theorem~\ref{main}.
The proof of Theorem~\ref{main} follows the outline of the argument of the corresponding construction in the Ricci flow case by Stolarski \cite{Sto}. 
More precisely, we used the Wa\.zewski box argument to find a good perturbation using the eigenfunctions of the stability operator of the Gaussian functional. 
The goal is to make the mean curvature flow starting from a perturbation of a doubling of an asymptotically conical shrinker develop a singularity modeled on the chosen shrinker itself. 
To be more specific, we perturb the doubling by eigenfunctions and consider the mean curvature flow starting from the perturbations. 
By considering the first time when the flow gets outside of the "boxes with good estimates", we can see that the only possibility is that its unstable component hits the boundary. 
By topological arguments, we can prove that there exists a perturbation which always stays in the box, which means that it develops the corresponding singularity.

An obvious difference between our work and~\cite{Sto} is the extrinsic nature of mean curvature flows.
When working with the Ricci flow, Stolarski perturbed the metric tensor to achieve the goal of constructing the desired closed Riemannian manifold.
In the case of mean curvature flows, we perturb a doubling of a shrinker by, as expected, looking at normal graphs on it.
Thus, the role of the perturbing function $h$ in our work is not the role of the perturbing tensor in~\cite{Sto}.
Instead, since metric tensors come from pairings of first derivatives, perturbing tensors should correspond to the gradient of our perturbing functions.
This requires additional work to relate the usual $C^k$ norms with the homogeneous norms on an asymptotically conical shrinker.

Other than a slight difference mentioned above, there are three main difficulties in the case of mean curvature flows. 
The first one is that we have to write the flow as a normal graph over the shrinker to use the tools from analysis, while in the Ricci flow case \cite{Sto}, one only needs to pull back tensors. 
Moreover, we have to guarantee the embeddedness of the modified doubling of an asymptotically conical shrinker and control the curvature bound on the gluing part, while in the Ricci flow case, we can use cutoff functions to modify the metric directly. 
Finally, the shrinker quantity is more nonlinear than the Ricci curvature and hence complicates the arguments.

We remark that there are constructions of mean curvature flows with singularities modeled on minimal cones, which form another important class of singularities.
In~\cite{V94}, Vel\'{a}zquez constructed mean curvature flows with singularities modeled on the Simons cones.
Following his work, more precise information has been studied for flows with such singularities.
See, for example, \cite{GS18, S23V, Liu}.

This paper is organized as follows. 
In Section \ref{sec:1}, we provide preliminary notations and results for asymptotically conical self-shrinkers 
and talk about the evolution equations of related immersions. 
In Section \ref{sec:3}, we give an explicit construction of an embedded doubling of an asymptotically conical shrinker. 
In Sections \ref{sec:4} and \ref{sec:5}, we talk about the interior estimates and constructions of barrier functions to get the preservation and the improvement of $C^2$ bounds. 
In Section \ref{sec:6}, we use the Wa\.zewski box argument to finish the proof of Theorem~\ref{main} and conclude with the proof of  Corollary~\ref{cor:fatten}.

\subsection*{\bf Acknowledgement}
The authors are grateful to Toby Colding and Bill Minicozzi for their constant support and several inspiring conversations. 
They would like to thank Letian Chen, Otis Chodosh, Alec Payne, Max Stolarski, Ao Sun, Lu Wang, and Brian White for valuable discussions and comments, and to Alec Payne for bringing the work of Angenent, Ilmanen, and Vel\'azquez to their attention and suggesting the application regarding fattening.
During the project, Lee was partially supported by NSF Grant DMS 2005345 and Zhao was supported by NSF Grant DMS 1812142, NSF Grant DMS 1811267 and NSF Grant DMS 2104349.

\section{\bf Preliminaries facts and evolution equations}\label{sec:1}

\subsection{Analysis on asymptotically conical shrinkers}

Let $\Sigma$ be an asymptotically conical shrinker in $\bb R^{n+1}.$ 
In this paper, this means that its asymptotic cone 
\begin{align}\label{cone-C}
    \mathcal C = \lim\limits_{t\to 0}\sqrt{-t}\Sigma
\end{align}
is a regular cone, i.e., a cone whose singular set consists of the origin.
This immediately implies a curvature bound
\begin{align}\label{A-decay}
    \abs{\n^j A}
    \le \frac {C_j}{|x|^{j+1}}
\end{align}
for some $C_j = C_j(\Sigma)<\infty.$
One can see \cite{W14} for a proof of this.

The weighted Laplacian operator $\dL$ with respect to the Gaussian functional and the stability operator of the Gaussian functional are given by (cf. \cite{CM12})
\begin{align*}
    \dL u
    &:= \D u - \frac 12 \pair{x^T, \n u}, \text{ and}\\
    Lu
    &:= \dL u + \pr{|A|^2+\frac 12}u,
\end{align*}
where $\D$ is the Laplacian on $\Sigma$ and $A$ is the second fundamental form of $\Sigma.$
It is a self-adjoint operation when acting on the $W^{1,2}$ Gaussian space with respect to the weight $e^{-\frac{|x|^2}{4}}dV_g.$

The corresponding $L^2$ and $H^k$ spaces will be denoted by $L^2_W(\Sigma)$ and $H^k_W(\Sigma).$
As in the Ricci flow case (\cite[Lemma 2.16]{Sto}), the inclusion map given by $H^1_W(\Sigma)\to L^2_W(\Sigma)$ is a compact embedding.
This is rigorously proven in \cite[Appendix~B]{BW17} (cf. \cite{CS,LZ}).
In \cite{CS}, it is also shown that $L\colon H^2_W\pr{\Sigma}\to L^2_W(\Sigma)$ is a bounded operator.
Hence, elliptic theories imply that the spectrum of $L$ behaves as nicely as we expect.

\begin{prop}\label{prop:eigenfunctions}
    There exists an $L^2_W$-orthonormal basis $\{h_i\}_{i\in\bb N}$ of $L^2_W(\Sigma)$ such that for all $i\in\bb N,$ $h_i$ is an eigenfunction of $L$ with eigenvalue $\lambda_i,$ that is, $L h_i = - \lambda_i h_i.$\footnote{
    We use the different sign convention about eigenvalues from the one used in \cite{Sto}.
    }
    The eigenvalues $\lambda_i$'s satisfy $\lambda_1<\lambda_2\le \lambda_3\le\cdots$ tending to $+\infty,$
    and each eigenspace is finite dimensional.
\end{prop}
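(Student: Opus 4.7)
The plan is to derive Proposition~\ref{prop:eigenfunctions} from the classical spectral theorem for compact self-adjoint operators. The key analytic input, explicitly stated in the excerpt, is the compactness of the embedding $H^1_W(\Sigma)\hookrightarrow L^2_W(\Sigma)$.

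First I would construct a compact self-adjoint resolvent for $\dL$. Consider the bilinear form
\begin{align*}
B(u,v) := \int_\Sigma \pr{\pair{\n u, \n v} + uv}\, e^{-|x|^2/4}\,dV_g
\end{align*}
on $H^1_W(\Sigma)$; since $B(u,u)=\|u\|_{H^1_W}^2$, it is coercive. By the Lax--Milgram theorem, for each $f\in L^2_W(\Sigma)$ there is a unique $u=:Tf\in H^1_W(\Sigma)$ satisfying $B(u,\varphi)=\pair{f,\varphi}_{L^2_W}$ for all $\varphi\in H^1_W(\Sigma)$; this is the weak form of $(-\dL+1)u=f$. The resulting bounded map $T\colon L^2_W\to H^1_W$, composed with the compact embedding, gives a compact operator $T\colon L^2_W\to L^2_W$ that is self-adjoint (inherited from the self-adjointness of $\dL$ recorded in the excerpt) and positive, since $\pair{Tf,f}_{L^2_W}=B(Tf,Tf)\ge 0$.

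Applying the spectral theorem for compact self-adjoint operators then yields an $L^2_W$-orthonormal basis $\{h_i\}_{i\in\bb N}$ of eigenfunctions of $T$ with eigenvalues $\mu_i\to 0^+$ and finite-dimensional eigenspaces. Standard elliptic regularity for the drift Laplacian upgrades each $h_i$ to a smooth function in $H^2_W(\Sigma)$, and from $Th_i=\mu_i h_i$ one reads off $\dL h_i=-\lambda_i h_i$ with $\lambda_i:=\mu_i^{-1}-1\to+\infty$. Reindexing in increasing order produces the desired spectral decomposition.

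Finally I would verify the strict inequality $\lambda_1<\lambda_2$. The constant function lies in $L^2_W(\Sigma)$ (the Gaussian weight dominates the polynomial volume growth of $\Sigma$) and satisfies $\dL 1=0$, so $\lambda_1\le 0$; conversely, integration by parts yields
\begin{align*}
\int_\Sigma (-\dL u)\,u\, e^{-|x|^2/4}\,dV_g = \int_\Sigma |\n u|^2\, e^{-|x|^2/4}\,dV_g \ge 0,
\end{align*}
so $\lambda_1=0$, and any first eigenfunction must satisfy $\n u\equiv 0$ and hence be constant on the (connected) shrinker $\Sigma$, making $\lambda_1$ simple. The only mild technical obstacle is justifying the above integration by parts and elliptic regularity on the noncompact submanifold $\Sigma$, but both are routine given its asymptotically conical structure and polynomial volume growth, and are already handled in the cited references \cite{BW17,CS,LZ}.
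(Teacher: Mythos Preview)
Your proposal is correct and follows essentially the same route the paper indicates: the paper does not spell out a proof but simply records the compact embedding $H^1_W(\Sigma)\hookrightarrow L^2_W(\Sigma)$ (from \cite{BW17}) together with boundedness of the operator, and then asserts that ``elliptic theories'' give the expected spectral behavior---you have just written out those standard details via the Lax--Milgram/compact-resolvent argument. Your additional verification that $\lambda_1=0$ is simple (using that constants lie in $L^2_W$ and integration by parts) is a nice touch the paper omits, though note it tacitly uses connectedness of $\Sigma$.
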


The next thing we need is a nice decaying property of these eigenfunctions.
Similar to the eigenfunctions of the weighted Lichnerowicz operator, one expects that the $L$-eigenfunctions also decay in a suitable manner.
This part follows from a general result proven by Colding--Minicozzi in \cite{CM21}.
A simpler case for $\dL$-eigenfunctions was dealt with by Colding--Minicozzi in \cite{CM20}.

\begin{thm}
    \label{thm:eigen-decay}
    Let $\Sigma$ be an asymptotically conical shrinker in $\bb R^{n+1}.$
    Given $\delta>0$ and $\lambda\in\bb R,$ there exists $C = C(\Sigma,\delta,\lambda)<\infty$ such that the following holds.
    If $Lh = - \lambda h$ with $\|h\|_{L^2_W}=1,$ then
    \begin{align*}
        |h(x)|\le C|x|^{2\pr{\lambda + \delta+\frac 12}}.
    \end{align*}
\end{thm}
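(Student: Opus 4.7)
The plan is to apply the general pointwise eigenfunction theorem of Colding--Minicozzi \cite{CM21} to the stability operator $L,$ leveraging that the potential $|A|^2+\tfrac12$ approaches the constant $\tfrac12$ at infinity. Rewriting the equation as $\dL h=-\pr{\lambda+\tfrac12+|A|^2}h,$ the curvature decay \eqref{A-decay} gives $|A|^2\le C|x|^{-2}$ on the asymptotically conical ends, so $h$ satisfies a drift-Laplace eigenvalue equation whose effective eigenvalue at infinity is $\lambda+\tfrac12.$ This parallels the simpler $\dL$-eigenfunction case treated in \cite{CM20}.

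The heart of the argument will be a weighted $L^2$-decay estimate on Euclidean annuli. Since $\dL$ is self-adjoint with respect to $e^{-|x|^2/4}\,dV,$ pairing the equation with carefully chosen test functions (combining radial cutoffs with inverse polynomial weights $|x|^{-2\beta}$) and integrating by parts should produce a first-order differential inequality for the weighted energy $F(R):=\int_{\Sigma\cap\set{|x|\ge R}} h^2\,e^{-|x|^2/4}\,dV.$ Iterating this inequality together with the normalization $\|h\|_{L^2_W}=1$ should yield, for any $\delta_1>0$ and all $R\ge R_1(\Sigma,\lambda,\delta_1),$
\begin{align*}
    \int_{\Sigma\cap\set{R\le|x|\le 2R}} h^2\,e^{-|x|^2/4}\,dV
    \le C\,e^{-R^2/4}\,R^{2\lambda+1+2\delta_1},
\end{align*}
where the exponent $2\lambda+1+2\delta_1$ arises from balancing the polynomial weight against the drift term during integration by parts, with $\delta_1$ absorbing the contribution of the $|A|^2$ potential and of the commutators. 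I would then upgrade this integrated decay to a pointwise bound via interior elliptic regularity: on a Euclidean ball of radius $\tfrac12$ around any $x_0$ with $|x_0|\sim R,$ the operator $L$ has uniformly bounded coefficients by \eqref{A-decay}, so Moser iteration applied to $Lh=-\lambda h$ gives $|h(x_0)|^2\le C\int_{\Sigma\cap B_1(x_0)} h^2\,dV.$ Removing the Gaussian weight on this annulus costs a factor comparable to $e^{R^2/4},$ which cancels exactly, so combining the two estimates gives $|h(x_0)|\le C R^{\lambda+1/2+\delta_1};$ choosing $\delta_1<\delta$ would complete the argument.

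The main obstacle will be the sharp extraction of the polynomial exponent $\lambda+\tfrac12+\delta$ from the weighted-energy inequality: small improvements to $\delta_1$ are sensitive to the precise interplay between the drift, the Gaussian weight, and the potential $|A|^2+\tfrac12,$ and naive Caccioppoli estimates only give super-polynomial bounds. This is exactly what \cite{CM21} addresses in a general framework of weighted drift Schr\"odinger operators, and the hypotheses there should be readily verifiable for $L$ on $\Sigma$ using only the decay \eqref{A-decay} and the standard structure of asymptotically conical shrinkers.
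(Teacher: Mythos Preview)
Your proposal is correct and follows essentially the same route as the paper: both observe that the curvature decay \eqref{A-decay} makes $|A|^2$ small outside a large ball, so that $h$ satisfies a drift-Laplace inequality with effective eigenvalue arbitrarily close to $\lambda+\tfrac12$, and then defer to the general decay result of Colding--Minicozzi \cite{CM21}. The paper is simply more terse, writing $h\dL h\ge -(\lambda+\delta+\tfrac12)h^2$ on $\Sigma\setminus B_R$ and applying \cite[Theorem~3.9]{CM21} directly with $f=|x|^2/4$, $b=|x|$, $\psi=Ch^2$, rather than sketching the internal weighted-$L^2$ and elliptic-regularity mechanism as you do.
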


\begin{proof}
    The key property that plays a role here is the rapid curvature decay of $\Sigma.$
    In fact, if $Lh=-\lambda h,$ we have
    $h\dL h
    = h\pr{L-\pr{|A|^2+\frac 12}}h.$
    Given a positive $\delta>0,$ by \eqref{A-decay}, we can find $R>0$ such that $|A|^2\le \delta$ on $\Sigma\setminus B_R.$
    Then, on $\Sigma\setminus B_R,$ we have
    \begin{align*}
        h\dL h
        \ge -\lambda |h|^2 - \pr{\delta+\frac 12} h^2.
    \end{align*}
    Now we apply \cite[Theorem~3.9]{CM21}
    by considering $f=\frac{|x|^2}4,$ $b=|x|=r,$ and $\psi = Ch^2$ in their work and obtain the desired estimate for $h.$
    Note that the $\lambda$-eigenspace is finite-dimensional, so we can find a constant depending on $\lambda$ working for all $\lambda$-eigenfunctions.    
\end{proof}

Finally, we mention some weighted spaces that we will use following \cite{CS}.
Let $\tilde{r}$ be a fixed smooth function such that $\tilde{r}\geq 1$ and $\tilde{r}=|x|$ for $|x|\geq 2$. 
We consider the weighted norms 
 \begin{align*}
     \|f\|_{0;-\gamma}^{\rm hom}&:=\sup_{x\in\Sigma}\tilde{r}(x)^{\gamma}|f(x)|,\text{and}\\
     [f]_{\alpha;-\gamma-\alpha}^{\rm hom}&:=\sup_{x,y\in \Sigma}\frac{1}{\tilde{r}(x)^{-\gamma-\alpha}+\tilde{r}(y)^{-\gamma-\alpha}}\frac{|f(x)-f(y)|}{|x-y|^\alpha}.
\end{align*}
The space $C^{0,\alpha}_{{\rm hom},-\gamma}$ is defined to collect all $f$ such that
\begin{align*}
     \|f\|_{C^{0,\alpha}_{{\rm hom},-\gamma}}
     :=\|f\|_{0;-\gamma}^{\rm hom}+[f]^{\rm hom}_{\alpha;-\gamma-\alpha}<\infty,
\end{align*}
and the space $C^{1,\alpha}_{{\rm hom},-\gamma}$ is defined to collect all $f$ such that
\begin{align*}
     \|f\|_{C^{1,\alpha}_{{\rm hom},-\gamma}}
     :=\sum_{j=0}^1\|(\na_\Sigma)^jf\|_{C^{0,\alpha}_{{\rm hom},-j-\gamma}}<\infty.
\end{align*}

\subsection{\bf Evolution equation of immersions}
\label{evolu}
Next, we are going to view hypersurfaces as immersions into a Euclidean space.
Given a hypersurface $\Sigma\subset \bb R^{n+1}$, if we view $(\Sigma^n,\,g)$ as a Riemannian manifold, where $g = g_\Sigma$ is the metric induced from the Euclidean one, there is a canonical map 
$F=(f_1,f_2,\cdots,f_{n+1})\colon \Sigma\to \RR^{n+1}$ which corresponds to the embedding. 
Then we have $g(X,Y)=\la X(F),\,Y(F)\ra.$
When $\Sigma$ is a self-shrinker, the shrinker equation now reads
\begin{align}\label{shrinker}
    \Delta^gF=-\frac{F^\perp}{2}.
\end{align}
We claim that the right hand side of \eqref{shrinker} can be expressed in terms of $\na^g f_i$. 
In fact, in normal coordinates of $(\Sigma,g)$ around a point $x$, we have $(\na f_1,\,\na f_2,\,\cdots, \na f_{n+1} )$ is an $n\times (n+1)$-matrix. 
From the definition, we know that the normal direction is perpendicular to the row space of this matrix. 
From linear algebra theories, we have that the normal vector can be expressed by
\begin{align*}
    \N_F = 
    \frac{(a_1,\cdots,a_{n+1})}{\sqrt{\sum_{j=1}^{n+1}a_j^2}}
    \text{ where }a_i=(-1)^{i+1}\det(\na f_1,\cdots,\na f_{i-1},\na f_{i+1},\cdots \na f_{n+1}).
\end{align*}
We then have
$F^\perp=\la F,\, \N_F\ra \N_F$
can be expressed in terms of $f_i$'s and $\na f_i$'s.

We consider another family of immersions 
$F_0:\Sigma\times I\to \RR^{n+1}$. 
It induces another family of metrics 
\begin{align*}
    g_0(t)(X,Y)=\la X(F_0(\cdot,t)),\,Y(F_0(\cdot,t))\ra.
\end{align*}
We aim at constructing a family of diffeomorphisms $\varphi_t$ of $\Sigma$ and functions $h$ such that \begin{align*}
    \frac{1}{\sqrt{1-t}}F_0(\varphi_t(x),t)
    = F(x) + h(x,t)\N_F(x)
    = F_1(x,t),
\end{align*}
so $\varphi_t(x)$ satisfies 
\begin{align*}
      &(\partial_tF_0)(\varphi_t(x),t)+dF_0(\partial_t\varphi_t(x))(\varphi_t(x),t)
      = \sqrt{1-t}\,\partial_t{h}(x,t)\N_F(x)
      -\frac{1}{2\sqrt{1-t}}\pr{F(x)+{h}(x,t)\N_F(x)}.
\end{align*}
Thus, we have 
\begin{align}
     &(\partial_tF_0)(\varphi_t(x),t)+dF_0(\partial_t\varphi_t(x))(\varphi_t(x),t)+\frac{1}{2\sqrt{1-t}} F(x)\notag
     =\pr{\sqrt{1-t}\,\partial_t{h}(x,t)-\frac{1}{2\sqrt{1-t}}{h}(x,t)}\N_F(x).\notag
\end{align}
As we assume that locally $\frac{1}{\sqrt{1-t}}F_1$ can be written as a graph over $F$, we have 
$
    \la\N_{F_0}(\varphi_t(x)),\N_F(x)\ra\neq 0.
$
We write 
\begin{align}
    \N_F(x)
    = \la\N_F(x),\N_{F_0}(\varphi_t(x))\ra\N_{F_0}(\varphi_t(x))
    +(\N_F(x) - \la\N_F(x),\N_{F_0}(\varphi_t(x))\ra\N_{F_0}(\varphi_t(x))),
\end{align}
and we can take 
\begin{align*}
  dF_0\pr{\partial_t\varphi_t(x)}
  =& \frac{\la (\partial_tF_0)(\varphi_t(x),t)+\frac{1}{2\sqrt{1-t}} F(x),\N_{F_0}(\varphi_t(x))\ra}{\la\N_F(x),\N_{F_0}(\varphi_t(x))\ra}(\N_F(x)-\la\N_F(x),\N_{F_0}(\varphi_t(x))\ra\N_{F_0}(\varphi_t(x)))\notag\\ 
  &- ((\partial_tF_0)(\varphi_t(x),t)+\frac{1}{2\sqrt{1-t}} F(x)-\la (\partial_tF_0)(\varphi_t(x),t)+\frac{1}{2\sqrt{1-t}} F(x),\N_{F_0}(\varphi_t(x))\ra\N_{F_0}(\varphi_t(x)))\\
  =&\frac{\la (\partial_tF_0)(\varphi_t(x),t)+\frac{1}{2\sqrt{1-t}} F(x),\N_{F_0}(\varphi_t(x))\ra}{\la\N_F(x),\N_{F_0}(\varphi_t(x))\ra}\N_F(x)-((\partial_tF_0)(\varphi_t(x),t)+\frac{1}{2\sqrt{1-t}} F(x)).
\end{align*}
In particular, we have 
\begin{align*}
    |dF_0(\partial_t\varphi_t(x))|
    \leq \frac{\abs{(\partial_tF_0)(\varphi_t(x),t)+\frac{1}{2\sqrt{1-t}} F(x)}}
    {|\la\N_F(x),\N_{F_0}(\varphi_t(x))\ra|}.
\end{align*}
Therefore, we have
\begin{align}
    \partial_t{h}(x,t)=\frac{1}{2(1-t)}{h}(x,t)+\frac{\frac{1}{\sqrt{1-t}}\la(\partial_tF_0)(\varphi_t(x),t),\N_{F_0}(\varphi_t(x))\ra+\frac{1}{2(1-t)}\la F(x),\N_{F_0}(\varphi_t(x))\ra}{\la\N_F(x),\N_{F_0}(\varphi_t(x))\ra}.
\end{align}
Setting $e^\tau=\frac{1}{1-t},$
we get
$
    \partial_t=\frac{\partial \tau}{\partial t}\partial_\tau=\frac{1}{1-t}\partial_\tau,
$
and thus,
\begin{align*}
    \partial_\tau{h}(x,t)
    =&\frac{1}{2}{h}(x,t)+\frac{{\sqrt{1-t}}\la (\partial_tF_0)(\varphi_t(x),t),\N_{F_0}(\varphi_t(x))\ra+\frac{1}{2}\la F(x),\N_{F_0}(\varphi_t(x))\ra}{\la\N_F(x),\N_{F_0}(\varphi_t(x))\ra}\\ 
    =& \frac{\la \Delta^{g_1}F_1,\N_{F_1}\ra(x,t)}{\la\N_F(x),\N_{F_1}(x)\ra}+\frac{{\sqrt{1-t}}\la (\partial_tF_0-\Delta^{g_0}F_0)(\varphi_t(x),t),\N_{F_0}(\varphi_t(x))\ra}{\la\N_F(x),\N_{F_1}(x)\ra}+\frac{\frac{1}{2}\la F_1(x),\N_{F_1}(x)\ra}{\la\N_F(x),\N_{F_1}(x)\ra}.
\end{align*}
If we denote 
\begin{align*}
    F_1=( (f_1)_1,(f_1)_2,\cdots,(f_1)_{n+1})
    \text{ and }h\,\N_F=( h_1,h_2,\cdots,h_{n+1}),
\end{align*}
we have
\begin{align*}
    \N_{F_1}
    = \frac{(\widetilde{a_1},\cdots,\widetilde{a_{n+1}})}{\sqrt{\sum_{j=1}^{n+1}\widetilde{a_j^2}}}\text{ where } 
    \td{a_i}=(-1)^{i+1}\det\pr{\na (f_1+h_1),\cdots,\widehat{\na (f_{i}+h_{i})},\cdots, \na (f_{n+1}+h_{n+1})}.
\end{align*}
In addition, we denote 
\begin{align*}
    b_i
    &=\td{a_i} - a_i\\
    & =(-1)^{i+1}\pr{\det\pr{\na (f_1+h_1),\cdots,\widehat{\na (f_{i}+h_{i})},\cdots \na (f_{n+1}+h_{n+1})}
    - \det\pr{\na f_1,\cdots,\widehat{\na f_i},\cdots \na f_{n+1}}}.
\end{align*}
Then, we derive
\begin{align*}
\td \N
:= \N_{F_1}-\N_F
=\pr{\frac{a_i\pr{\sum_{j=1}^{n+1}-2a_jb_j-b_j^2} + b_i\sqrt{\sum_{j=1}^{n+1}a_j^2}\pr{\sqrt{\sum_{j=1}^{n+1}(a_j+b_j)^2}+\sqrt{\sum_{j=1}^{n+1}a_j^2}}}
{\sqrt{\sum_{j=1}^{n+1}(a_j+b_j)^2}\sqrt{\sum_{j=1}^{n+1}a_j^2}
\pr{\sqrt{\sum_{j=1}^{n+1}(a_j+b_j)^2}+\sqrt{\sum_{j=1}^{n+1}a_j^2}}}
}_{i=1,2,\cdots,n+1}.
\end{align*}

In local coordinates, we have
\begin{align*}
     &\frac{\ppair{\Delta^{g_1}F_1+\frac{F_1^\perp}{2},\N_{F_1}}} {\la \N_F,\N_{F_1} \ra}\N_F
     =\frac{\ppair{\Delta^{g_1}F_1+\frac{F_1^\perp}{2},\N_F}} {\la \N_F,\N_{F_1}\ra^2}\N_F\\
     =&\ppair{\Delta^{g}( h)+\frac{F_1^\perp-F^\perp}{2}+(\Delta^{g_1}-\Delta^g)F_1,\N_F} \frac{\N_F}{\la \N_F,\N_{F_1}\ra^2}\\ 
     =&\ppair{\Delta^{g}( h\,\N_F)
     -\frac{1}{2}\pr{\la F,\N_F\ra\N_F-\la F_1,\N_{F_1}\ra\N_{F_1}}
     +\pr{g_1^{ab}-g^{ab}} \partial_a\partial_bF_1
     -\pr{g_1^{ab}\Gamma^{g_1,c}_{ab}-g^{ab}\Gamma^{g,c}_{ab}} \partial_c F_1,\N_F} \frac{\N_F}{\la \N_F,\N_{F_1}\ra^2}\\
     =&\left\langle
     \Delta^{g}( h\,\N_F) + \frac{1}{2}\la ( h\,\N_F),\N_F\ra \N_F
     -\frac{1}{2}(\la F_1,\N_F\ra\N_F-\la F_1,\N_F
     +\td \N\ra(\N_F+\td \N)) + \td{( h \N_F)}^{ab} \partial_a\partial_b F_1 \right.\\  
     &\left. -\td{( h\,\N_F)}^{ab}\Gamma^{g,c}_{ab}\partial_c F_1-g_1^{ab}(\Gamma^{g_1,c}_{ab}
     -\Gamma^{g,c}_{ab})\partial_c F_1,\N_F\right\rangle
     \frac{\N_F}{\la \N_F,\N_{F_1}\ra^2}\\
     =& \left\langle\Delta^{g}( h\,\N_F)+\frac{1}{2}\la ( h\,\N_F),\N_F\ra \N_F+\frac{1}{2}(\la F+( h\,\N_F),\td \N\ra(\N_F+\td \N)+\la F+( h\,\N_F),\N_F\ra\td \N)+\tilde{h}^{ab}\partial_a\partial_bF_1\right.\\ 
     &-\tilde{h}^{ab}\Gamma^{g,c}_{ab}\partial_c F_1-\frac{1}{2}((\tilde{h}^{ab}+g^{ab})(g^{cd}+\tilde{h}^{cd})(\hat{h}_{da,b}+\hat{h}_{db,a}-\hat{h}_{ab,d})-\tilde{h}^{ab}\tilde{h}^{cd}(g_{da,b}+g_{db,a}-g_{ab,d}))\partial_c F_1\\
     &\left.-\frac{1}{2}(g^{ab}\tilde{h}^{cd}(g_{da,b}+g_{db,a}-g_{ab,d}))\partial_c F_1,\N_F\right\rangle
     \frac{\N_F}{\la \N_F,\N_{F_1}\ra^2}.
\end{align*}
Thus, we have
\begin{align*}
     &\frac{\partial ( h\,\N_F)}{\partial \tau}\\
      =&\left\langle\Delta^{g}( h\,\N_F) 
     + \frac{1}{2}h\,\N_F 
     + \frac{1}{2}(\la F, \td\N\ra \N_F
     + \la F,\N_F\ra\td \N)+\tilde{h}^{ab}\nabla^g_a\nabla^g_b F \right.\\  
     &- \frac{1}{2}\pr{g^{ab}g^{cd}
     \pr{\hat{h}_{da,b}+\hat{h}_{db,a}-\hat{h}_{ab,d}} + g^{ab}\tilde{h}^{cd}(g_{da,b}+g_{db,a}-g_{ab,d})}
     \partial_c F+\frac{1}{2}(\la h\,\N_F,\td \N\ra(\N_F+\td \N)+\la F,\td \N\ra\td \N+h\td \N)\\   
     &+ \tilde{h}^{ab} \nabla^g_a\nabla^g_b (h\,\N_F) -\frac{1}{2} \pr{g^{ab}g^{cd}
     \pr{\hat{h}_{da,b}+\hat{h}_{db,a}-\hat{h}_{ab,d}} + g^{ab}\tilde{h}^{cd} \pr{g_{da,b}+g_{db,a}-g_{ab,d}}}\partial_c (h\,\N_F)\\ 
     &-\left. \frac{1}{2}((\tilde{h}^{ab}g^{cd}+g^{ab}\tilde{h}^{cd})(\hat{h}_{da,b}+\hat{h}_{db,a}-\hat{h}_{ab,d})-\tilde{h}^{ab}\tilde{h}^{cd}(g_{da,b}+g_{db,a}-g_{ab,d}))(\partial_c F+\partial_c (h\,\N_F))),\N_F\right\rangle
     \frac{\N_F}{\la \N_F,\N_{F_1}\ra^2}\\ 
     &+\frac{{\sqrt{1-t}}\la (\partial_tF_0-\Delta^{g_0}F_0)(\varphi_t(x),t),\N_{F_0}(\varphi_t(x))\ra}{\la\N_F(x),\N_{F_1}(x)\ra}\N_F\\ 
     =& \left\langle\Delta^{g}h\cdot\N_F-h|A|^2\N_F+\frac{1}{2}h\,\N_F+\frac{1}{2}(\la F,\td \N\ra\N_F+\la F,\N_F\ra\td \N)+\tilde{h}^{ab}\nabla^g_a\nabla^g_b F \right.\\
     &+\frac{1}{2}(\la h\,\N_F,\td \N\ra(\N_F+\td \N)+\la F,\td \N\ra\td \N+h\td \N)+\tilde{h}^{ab}\nabla^g_a\nabla^g_b (h\,\N_F)\\ 
     &-\frac{1}{2}\pr{g^{ab}g^{cd}(\hat{h}_{da,b}+\hat{h}_{db,a}-\hat{h}_{ab,d})+g^{ab}\tilde{h}^{cd}(g_{da,b}+g_{db,a}-g_{ab,d})}\partial_c h\cdot\N_F\\  
     &-\left.\frac{1}{2} \pr{(\tilde{h}^{ab}g^{cd}+g^{ab}\tilde{h}^{cd})(\hat{h}_{da,b}+\hat{h}_{db,a}-\hat{h}_{ab,d})-\tilde{h}^{ab}\tilde{h}^{cd}(g_{da,b}+g_{db,a}-g_{ab,d})} \partial_c h\cdot\N_F,\N_F\right\rangle
     \frac{\N_F}{(1+\la \N_F,\td \N\ra)^2}\\ 
     &+\frac{{\sqrt{1-t}}\la (\partial_tF_0-\Delta^{g_0}F_0)(\varphi_t(x),t),\N_{F_0}(\varphi_t(x))\ra}{\la\N_F(x),\N_{F_1}(x)\ra}
     \N_F,
\end{align*}
where $\tilde{h}^{ab}=g^{ab}_1-g^{ab}$, $\hat{h}_{ab}=(g_{1})_{ab}-g_{ab}$.

As in the beginning, we denote the second fundamental form on the image of $F$ as $A_{ij}$. 
If we work in normal coordinates around a point, we have 
\begin{align}\label{nota}
   \td \N&=-\nabla^g{h}+C_1(F,\nabla F,\na^2F,h,\nabla h)*\nabla (h\,\N_F)*\nabla (h\,\N_F),\\
    \notag \hat{h}_{ab}&=\la\partial_aF,\partial_b(h\,\N_F)\ra+\la\partial_a(h\,\N_F),\partial_bF\ra+\la\partial_a(h\,\N_F),\partial_b(h\,\N_F)\ra=2hA_{ab}+\la\partial_ah,\partial_bh\ra+h^2A_{ia}A_{bj}g^{ij}, \text{and}\\ \notag 
    \tilde{h}^{ab}&= -2hA^{ab}+C_{2,ab}(F,\nabla F,\na^2F,h,\nabla h)*\nabla h*\na h+C_{3,ab}(F,\nabla F,\na^2F,h,\nabla h)*hA,
\end{align}
where 
$$\|C_1(F,\nabla F,\na^2F,h,\nabla h)\|_{C^0}+\sum_{a,b}\|C_{2,ab}(F,\nabla F,\na^2F,h,\nabla h)\|_{C^0}+\|C_{3,ab}(F,\nabla F,\na^2F,h,\nabla h)\|_{C^0}\leq C_0\|h\|_{C^{1,\alpha}_{\rm hom,-1}}.$$
Also, note that $\Delta^g \N_F=-|A|^2\N_F+\nabla^gH$ and $A_{ij,k}=A_{ik,j}$ in $\RR^{n+1}$. 
Here, we use the notation $\h = \Delta^g F$ and 
$H = g^{ij}A_{ij} = -\ppair{\h,\N_F},$
so in normal coordinates, we have 
\begin{align*}
    &\ppair{\Delta^{g}h\cdot\N_F-h|A|^2\N_F+\frac{1}{2}h \N_F+\frac{1}{2}(\la F,\td \N\ra\N_F+\la F,\N_F\ra\td \N)+\tilde{h}^{ab}\nabla^g_a\nabla^g_b F,\N_F}\\
    =& \left\langle(\Delta^gh-|A|^2h)\N_F+\frac{1}{2}h\,\N_F+2h|A|^2\N_F+\frac{1}{2}(\la F,-\nabla^g{h}\ra\N_F+\la F,\N_F \ra(-\nabla^g{h}))\right.\\ 
    &\left. + C_1'(F,\na F,h,\na h)*\na (h\,\N_F)+C_2'(F,\na F,h,\na h) h\,\N_F,\N_F\right\rangle\\ 
    =& \pr{\Delta^gh+|A|^2h-\frac{1}{2}\la F,\nabla^g h\ra+\frac{1}{2}h}
    +C'_1(F,\na F,h,\na h)*\na (h\,\N_F)+C_2'(F,\na F,h,\na h) h.
\end{align*}

We recall the stability operator $L$ for the Gaussian function mentioned before, written using the embedding $F,$ 
$$Lf=\Delta^gf+|A|^2f-\frac{1}{2}\la F,\nabla^g f\ra+\frac{1}{2}f.$$ 
Combining all the calculations above, we can write the evolution equation of $h$ as
\begin{align}\label{h-equ}
    \frac{\partial h}{\partial \tau}=&Lh+\frac{{\sqrt{1-t}}\la (\partial_tF_0-\Delta^{g_0}F_0)(\varphi_t(x),t),\N_{F_0}(\varphi_t(x))\ra}{\la\N_F(x),\N_{F_1}(x)\ra}
    \\&+C_3(F,\na F,h,\na h)* h+C_4(F,\na F,h,\na h)*\na h+C_5(F,\na F,h,\na h)*\na^2 h
    \notag\\ 
    =& :Lh+\mathcal{Q}_1+\mathcal{Q}_2,\notag 
\end{align}
where
\begin{align*}
    \mathcal{Q}_1
    =&{\sqrt{1-t}}\ppair{(\partial_tF_0-\Delta^{g_0}F_0)(\varphi_t(x),t),\N_{F_0}(\varphi_t(x))}, 
    \text{ and}\\ 
    \mathcal{Q}_2=&C_3(F,\na F,h,\na h)*h+C_4(F,\na F,h,\na h)*\na h+C_5(F,\na F,h,\na h)*\na^2 h \notag
    \\&+{\sqrt{1-t}}
    \ppair{(\partial_tF_0-\Delta^{g_0}F_0)(\varphi_t(x),t),\N_{F_0}(\varphi_t(x))}
    \pr{\frac{1}{{\la\N_F(x),\N_{F_1}(x)\ra}}-1}.
\end{align*}
We remark that in normal coordinates, the second order term in $\mathcal{Q}_2$ corresponds to 
\begin{align}\label{Q_2second}
     &(\tilde{h}^{ab}\partial_a\partial_bh-g^{ab}g^{cd}\partial_a\partial_bh\,\partial_dh\,\partial_c h-(\tilde{h}^{ab}g^{cd}+g^{ab}\tilde{h}^{cd})\partial_a\partial_bh\,\partial_dh\,\partial_c h)\frac{\N_F}{(1+\la \N_F,\td \N\ra)^2}\\
        \notag 
    =&\frac{\tilde{h}^{ab}(1-|\nabla h|^2)-(|\nabla h|^2+\tilde{h}^{cd}\partial_ch\,\partial_dh)g^{ab}}{(1+\la \N_F,\td \N\ra)^2}\partial_a\partial_bh\cdot\N_F.
\end{align}

\section{\bf Constructions of doubling of asymptotically conical shrinkers} \label{double}\label{sec:3}
We are going to construct a closed hypersurface which is a double of a large compact set of the concerned asymptotically conical shrinker $\Sigma$.
The main idea is similar to \cite{Sto}, but here the process will be more technical since we are working on extrinsic hypersurfaces.

As in \eqref{cone-C}, denoting the asymptotic cone of $\Sigma$ by $\mathcal C$, from \cite{W14,CS}, we know that outside a large compact set $\Sigma\cap \ovl B_R$, we can parameterize $\Sigma\setminus B_R$ by $$F_0(r,\theta)=r\theta+f(r,\theta)\N$$ 
over $\mathcal C\setminus B_R$, where $\theta$ is the parametrization for the link of $\mathcal C$ and $\N$ is the normal vector of $\mathcal C$ at $(r,\theta)$.
We will modify $\Sigma$ in this region, and glue it with the compact part.

We choose a smooth cutoff function 
$\eta\colon\RR\to \RR$ 
such that $\eta=\left\{\begin{array}{cc}
     1,\,\,x<0  \\
     0,\,\,x>1 
\end{array}\right..$ 
Take 
$F_1(r,\theta)=r\theta+\eta(\frac{r-R}{R})f(r,\theta)\N$. 
Note that to glue two copies, we need to modify the end such that it becomes cylindrical. 

The first step is to move the link to a hemisphere. 
As the link of $\mathcal C$ is a codimension one submanifold of $S^{n-1}$, we can pick a vector $\mathbf v\in S^{n-1}$ such that $-\mathbf v$ is not contained in the link of $\mathcal C$. 
Initially, we have $F_1(r,\theta)=r\theta$ for $r>2R.$ 
We take 
    $$F_2(r,\theta)
    = r \cdot \frac{\pr{\frac{2}{3}\eta\pr{\frac{r-2R}{R}}+\frac{1}{3}}\theta + \pr{\frac{2}{3}-\frac{2}{3}\eta\pr{\frac{r-2R}{R}}}\mathbf v}
    {\abs{\pr{\frac{2}{3}\eta\pr{\frac{r-2R}{R}}+\frac{1}{3}}\theta + \pr{\frac{2}{3}-\frac{2}{3}\eta\pr{\frac{r-2R}{R}}}\mathbf v}}$$ 
so that for $r\geq 3R,$ the image is contained in a hemisphere. 
Consequently, we can define 
\begin{align*}
    F_3(r,\theta)
    = \begin{cases}
        r\theta+\eta(\frac{r-R}{R})f(r,\theta)\N
        &\text{if }R\leq r\le 2R\\
        r\cdot \frac{\pr{\frac{2}{3}\eta\pr{\frac{r-2R}{R}}+\frac{1}{3}}\theta + \pr{\frac{2}{3}-\frac{2}{3}\eta\pr{\frac{r-2R}{R}}}\mathbf v} {\abs{\pr{\frac{2}{3}\eta\pr{\frac{r-2R}{R}}+\frac{1}{3}}\theta + \pr{\frac{2}{3}-\frac{2}{3}\eta\pr{\frac{r-2R}{R}}}\mathbf v}}
        &\text{if }r\ge 2R
    \end{cases}.
\end{align*}

The next step is to modify the end such that the radial derivative is in the same direction. 
To this end, we take
\begin{align*}
    F_4(r,\theta)
    = \begin{cases}
        F_3(r,\theta)
        &\text{if }R \le r\le 3R\\
        3R\cdot \frac{\theta+2\mathbf v}{|\theta+2\mathbf v|}
        +\int_{3R}^r \pr{\eta(\frac{r-3R}{R})\frac{\theta+2\mathbf v}{|\theta+2\mathbf v|}+(1-\eta(\frac{r-3R}{R}))\mathbf v} ds
        &\text{if }3R\leq r\leq 4R
    \end{cases}.
\end{align*}

The last step is to make the end cylindrical and end at the same plane so that we can do a reflection to double it. 
Denote $B:=\sup_{\theta}\la F_4(4R,\theta),\,\mathbf v\ra,$ 
and consider 
\begin{align*}
    F(r,\theta)
    = \begin{cases}
        r\theta+\eta(\frac{r-R}{R})f(r,\theta)\N
        &\text{if }R \leq r\le 2R\\
        r\cdot \frac{\pr{\frac{2}{3}\eta\pr{\frac{r-2R}{R}}+\frac{1}{3}}\theta + \pr{\frac{2}{3}-\frac{2}{3}\eta\pr{\frac{r-2R}{R}}}\mathbf v} {\abs{\pr{\frac{2}{3}\eta\pr{\frac{r-2R}{R}}+\frac{1}{3}}\theta + \pr{\frac{2}{3}-\frac{2}{3}\eta\pr{\frac{r-2R}{R}}}\mathbf v}}
        &\text{if }2R\leq r\leq 3R\\
       3R\cdot \frac{\theta+2\mathbf v}{|\theta+2\mathbf v|}
        + \int_{3R}^r \pr{\eta(\frac{r-3R}{R})\frac{\theta+2\mathbf v}{|\theta+2\mathbf v|}+(1-\eta(\frac{r-3R}{R}))\mathbf v} ds
        &\text{if }3R\leq r\leq 4R\\
        F_4(4R,\theta) + \pr{\eta(\frac{r-4R}{R})-1}\la F_4(4R,\theta),\mathbf v\ra\mathbf v
        +\pr{1-\eta(\frac{r-4R}{R})} (B+R)\mathbf v
        &\text{if }4R\leq r\leq 5R
    \end{cases}.
\end{align*}

We can now glue $F$ with the original shrinker along $F(R,\cdot)$ with $\Sigma\cap \ovl B_R$. 
As $\eta$ is a cutoff function, we have that the glued hypersurface is also smooth. 
From the constructions, the $F_3$ part lies in a hemisphere and controls the distance to origin. 
Thus, we know that $F$ is globally an embedding, and after reflecting it with respect to the hyperplane
$\{x\in \RR^{n+1}\,|\, \la x, \mathbf v\ra = B+R\},$ 
we get a closed embedded hypersurface.

\section{\bf Interior estimates}\label{sec:4}
We are going to establish some interior estimates, as in the Ricci flow case developed in \cite{Sto}.
To get the required H\"older estimates in the case of mean curvature flows, we will modify the estimates in \cite[Proposition 2.5]{Bam}, \cite[Lemma 4.4]{App}, and \cite[Proposition C.4.]{Sto}. 
For completeness, we sketch their proofs here.

\begin{Pro}[H\"older Estimate. For the discussions in Ricci flow settings, see \cite{Bam, App, Sto}.]
\label{GHolder}
Take $2r\geq s>0$ and consider $\Omega=B_r\times [0,s^2]$ and $\Omega'=B_{2r}\times [0,s^2]$. Assume $u\in C^2(\Omega')$ satisfies 
\begin{align*}
    (\partial_t-L)u=Q[u]+F=&r^{-2}f_1(r^{-1}x,r^{-2}t,u,r\nabla u)\cdot u+r^{-1}f_2(r^{-1}x,r^{-2}t,u,r\nabla u)\cdot \nabla u\\ 
    &+ f_3(r^{-1}x,r^{-2}t,u,r\nabla u)\cdot \nabla u\otimes\nabla u+f_4(r^{-1}x,r^{-2}t,u,r\nabla u)\cdot u\otimes\nabla^2 u+F(x,t)
\end{align*}
with $u(\cdot,0)=u_0$, where $f_1,f_2,f_3,$ and $f_4$ are smooth. 
The operator $L$ is in the form 
    \begin{align*}
        Lu =a^{ij}(x,t)\partial^2_{ij}u+b^i(x,t)\partial_iu+c(x,t)u
    \end{align*}
and the coefficients satisfy the following bounds, for $m\geq 1,$
\begin{align*}
    \frac{1}{\Lambda}\leq a^{ij}\leq \Lambda,\,
    \|a^{ij}\|_{C^{2m-2,\alpha}(\Omega')}\leq \Lambda,\,
    \|b^{i}\|_{C^{2m-2,\alpha}(\Omega')}\leq r^{-1}\Lambda,\,\text{ and }
    \|c\|_{C^{2m-2,\alpha}(\Omega')}\leq r^{-2}\Lambda.
\end{align*}
Then there exist constants $\ep>0$ and $C<\infty$ depending on $n,m,\alpha,\Lambda,$ and $f_i$ such that if 
\begin{align*}
    \|u\|_{c^0(\Omega')}+\|u_0\|_{C^{2m,\alpha}(B_{2r})}+r^2\|F\|_{C^{2m-2,\alpha}(\Omega')}
    \leq \ep,
\end{align*}
then 
\begin{align*}
    \|u\|_{C^{2m,\alpha}(\Omega)}
    \leq C\pr{\|u\|_{C^0(\Omega')}+\|u_0\|_{C^{2m,\alpha}(B_{2r})}+r^2\|F\|_{C^{2m-2,\alpha}(\Omega')}}.
\end{align*}
\end{Pro}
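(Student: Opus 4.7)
The plan is to follow the strategy of Bamler, Appleton, and Stolarski in their parallel Ricci-flow estimates, adapted to the present quasilinear parabolic problem. I would begin by reducing to $r=1$ via the rescaling $\tilde u(y,\tau):=u(ry,r^2\tau)$ together with the corresponding rescalings of $F$ and of the coefficients of $L$; the specific powers of $r$ appearing in front of $f_1,f_2$ and in the coefficient bounds on $b^i,c$ are chosen precisely so that the rescaled equation has the same structural form with $r=1$ and the same constants $\Lambda$ and $\varepsilon$. From now on I assume $r=1$.

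The heart of the argument is the base case $m=1$. The key idea is to absorb the quasilinear term $f_4(x,t,u,\nabla u)\,u\cdot\nabla^2 u$ into the principal part of $L$, producing a perturbed operator $\tilde L$ whose top-order coefficients differ from those of $L$ by at most $C\|u\|_{C^0}\le C\varepsilon$. Provided $\varepsilon$ is small, $\tilde L$ remains uniformly elliptic; its coefficients are H\"older continuous once a preliminary $C^{1,\alpha}$ bound on $u$ is known, which follows from a weaker parabolic regularity step (De Giorgi--Nash--Moser applied to the equation viewed in divergence form). The remaining nonlinear pieces $\tilde Q[u]$ arising from $f_1 u$, $f_2\nabla u$, and $f_3\nabla u\otimes\nabla u$ all vanish at $u=\nabla u=0$, and by interpolation their $C^\alpha$-norms are bounded by $C\varepsilon^{\gamma}\|u\|_{C^{2,\alpha}}$ for some $\gamma>0$. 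Applying the standard interior parabolic Schauder estimate, together with the boundary Schauder estimate at $t=0$ using $u_0\in C^{2,\alpha}$, to $(\partial_t-\tilde L)u=\tilde Q[u]+F$ yields an inequality of the form
\[
\|u\|_{C^{2,\alpha}(\Omega)}\le C\pr{\|u\|_{C^0(\Omega')}+\|u_0\|_{C^{2,\alpha}(B_2)}+\|F\|_{C^\alpha(\Omega')}}+C\varepsilon^{\gamma}\|u\|_{C^{2,\alpha}(\Omega')},
\]
and a standard covering/iteration argument over shrinking subdomains absorbs the last term into the left side once $\varepsilon$ is small, yielding the $m=1$ estimate.

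For $m\ge 2$ I would bootstrap by induction on $m$: differentiating the equation $2m-2$ times gives a parabolic equation for $\nabla^{2m-2}u$ of the same structural type, whose linear coefficients are now controlled in $C^{2,\alpha}$ by the inductive hypothesis and whose forcing involves derivatives of $F$ up to order $2m-2$ together with lower-order nonlinear remainders controlled at the previous level. Applying the base-case estimate to this differentiated equation, and using the smoothness of $f_1,\dots,f_4$ to commute differentiation with the nonlinearity, gives the $C^{2m,\alpha}$ bound.

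The main obstacle is the genuinely second-order quasilinear term $f_4\,u\cdot\nabla^2 u$: because it enters the principal symbol of the linearization, it cannot be treated as a mere forcing and must instead be folded into the linear operator. This is exactly why the smallness hypothesis $\|u\|_{C^0}\le\varepsilon$ is needed in the statement -- it ensures that the modification of the principal part is small, preserves ellipticity of $\tilde L$, and keeps the nonlinear error on the right-hand side of the Schauder estimate absorbable into the left. A secondary technical nuisance is that $\tilde L$ has coefficients depending on $\nabla u$, so the Schauder machinery only applies after a preliminary $C^{1,\alpha}$ step; this is why the argument has the two-layer structure of first upgrading $C^0$ to $C^{1,\alpha}$ and only then to $C^{2,\alpha}$.
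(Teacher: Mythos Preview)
Your approach is correct in spirit but organized quite differently from the paper's. The paper does \emph{not} absorb the quasilinear piece $f_4\,u\,\nabla^2 u$ into the principal part, nor does it run a separate induction on $m$. Instead, after the same rescaling to $r=1$, it introduces a family of weighted parabolic H\"older norms $\|\cdot\|^{(\theta)}_{C^{2m,\alpha}}$ (where $\theta$ is the scale parameter), applies the standard Schauder estimate on nested domains $\Omega_k=B_{r_k}\times[0,s^2]$ with $r_k=2-2^{-k}$, and treats \emph{all} of $Q[u]$ as forcing. Writing $a_k=\|u\|^{(\theta_k)}_{C^{2m,\alpha}(\Omega_k)}$ and $H$ for the data, this yields a recursion of the form $a_k\le C(\theta_{k+1}a_{k+1}+a_{k+1}^2+\cdots+a_{k+1}^{2m+1}+H)$. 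The point is that as $k\to\infty$ the weight $\theta_k\to 0$ kills all higher derivatives, so $\lim_k a_k\le\|u\|_{C^0}\le H$, and a backward induction in $k$ then propagates the bound $a_k\le 2C_0H$ down to $k=0$. This handles all $m$ simultaneously and needs no preliminary $C^{1,\alpha}$ step.

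Your route---absorb the top-order quasilinearity, run a two-stage $C^0\to C^{1,\alpha}\to C^{2,\alpha}$ argument for $m=1$, then bootstrap by differentiation---also works, and is closer to how one proves such estimates ``by hand'' without the weighted-norm machinery. One genuine slip: De Giorgi--Nash--Moser applied to the equation gives only $C^\alpha$ regularity for $u$, not $C^{1,\alpha}$; to get gradient H\"older control you would instead invoke parabolic $W^{2,p}$ theory (Krylov--Safonov plus Calder\'on--Zygmund) and Sobolev embedding, or a Bernstein-type gradient estimate. Once that is fixed, your absorption inequality and covering iteration go through. The trade-off is that the paper's weighted iteration is slicker and avoids both the preliminary regularity step and the separate induction on $m$, at the cost of introducing the $\theta$-scaled norms; your approach is more modular but requires more bookkeeping at each stage.
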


\begin{proof}
    As in \cite{Sto}, we define the weighted norm as 
    \begin{align*}
        \|u\|^{(\theta)}_{C^{2m,\alpha}(\Omega)}:=\sum_{i+2j\leq 2m}(r_\Omega\theta)^{i+2j}(\|\na^i\partial^j_tu\|_{C^0(\Omega)}+(r_\Omega\theta)^\alpha[\na^i\partial^j_tu]_{\alpha,\frac{\alpha}{2}}).
    \end{align*}
Then, from the standard parabolic estimates (cf. \cite{Kry}), we have  
\begin{align*}
    \|u\|_{C^{2m,\alpha}(B_r\times[0,s^2])}
    \leq C\pr{(r\theta)^2\|Q[u]+F\|_{C^{2m-2,\alpha}(B_{(1+\theta)r}\times [0,s^2])}^{(\theta)}+\|u\|^{(\theta)}_{C^0(B_{(1+\theta)r}\times [0,s^2])}+\|u_0\|^{(\theta)}_{C^{2m,\alpha}(B_{(1+\theta)r})}}.
\end{align*}
After rescaling, we may assume $r=1$. 
Define \begin{align*}
    H& := \|u\|_{C^0(B_2\times[0,s^2])} 
    + \|u_0\|_{C^{2m,\alpha}(B_2)}+\|F\|_{C^{2m-2,\alpha}(B_2\times [0,s^2])},\\  
    r_k&= 2-2^{-k},\,
    \theta_k = \frac{r_{k+1}-r_k}{r_k},\,
    \Omega=B_{r_k}\times[0,s^2],\text{ and}\\
    a_k&:=\|u\|^{(\theta_k)}_{C^{2m,\alpha}(\Omega_k)}
    \leq C\pr{\theta_k^2\|Q[u]\|^{(\theta_{k+1})}_{C^{2m-2,\alpha}(\Omega_{k+1})}+H}.
\end{align*}
For $i=1,\cdots,4,$ we have
\begin{align*}
    \|f_i(x,t,u,r\na u)\|_{C^{2m-2,\alpha}(\Omega_{k+1})}^{(\theta_{k+1})}
    \leq C\pr{1+(\|u\|_{C^{2m,\alpha}(\Omega_{k+1})}^{(\theta_{k+1})})^{2m-1}}
    \leq C\pr{1+a_{k+1}^{2m-1}},
\end{align*}
so by plugging in the estimates, we get 
\begin{align*}
    a_k
    \leq C\pr{\theta_{k+1}a_{k+1}+a_{k+1}^2+a_{k+1}^{2m}+a_{k+1}^{2m+1}+H}.
\end{align*}
Take $k_0$ such that for $k\geq k_0$, $\theta_{k+1}\leq \frac{1}{16C_0^2}$. 
Then for $b_k:=\frac{a_k}{H}$, $K\geq k_0,$ and $H\leq \ep,$ we have 
\begin{align*}
    b_k\leq \frac{1}{16}b_{k+1}+\frac{1}{16C_0}b_{k+1}^2+\frac{1}{16^{2m-1}C_0^{4m-3}}b_{k+1}^{2m}+\frac{1}{16^{2m}C_0^{4m-1}}b_{k+1}^{2m+1}+C_0.
\end{align*}
As $\lim_{k\to\infty}b_k\leq 1<2C_0$, by induction, we know that $b_{k_0}<2C_0$. 
Hence $a_0\leq C'H$, which completes the proof.   
\end{proof}

From Proposition~\ref{GHolder}, we have the following estimate.

\begin{Pro}[For the discussion in Ricci flow settings, see  {\cite[Lemma 6.2]{Sto}}] \label{hHolder}
Given $h$ satisfying \eqref{h-equ} on $\Omega'\times (\tau_0,\tau_2)$, let $\Omega\Subset \Omega'\Subset \Sigma$, $m\in\mathbb{N}$, $\tau_0\leq \tau_1< \tau_2,$ and $\mathcal{Q}_1=0$ on $\Omega'\times[\tau_0,\tau_2]$. 
Then there exist constants $\ep = \ep(n,m)>0$ and $C = C\pr{n,\Sigma,F,\dist(\Omega,M\setminus\Omega'),\tau_1-\tau_0} < \infty$ 
with the following estimates.\\
(1) If $\tau_1>\tau_0$, then 
\begin{align*}
    \|h\|_{L^\infty(\Omega'\times[\tau_0,\tau_2])}<\ep\Rightarrow \|h\|_{C^m(\Omega\times[\tau_1,\tau_2])}\leq C\|h\|_{L^\infty(\Omega'\times[\tau_0,\tau_2])}.
\end{align*}
(2) If $\tau_1=\tau_0$, then
\begin{align*}
    \|h\|_{L^\infty (\Omega'\times[\tau_0,\tau_2])} + \|h\|_{C^{m+1}(\Omega'\times\{\tau_0\})}<\ep\Rightarrow \|h\|_{C^m(\Omega\times[\tau_0,\tau_2])}
    \leq C\pr{\|h\|_{L^\infty(\Omega'\times[\tau_0,\tau_2])}+\|h\|_{C^{m+1}(\Omega'\times\{\tau_0\})}}.
\end{align*}
\end{Pro}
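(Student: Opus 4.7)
The plan is to deduce both estimates from Proposition~\ref{GHolder} by localizing in space-time and bootstrapping. Since $\Omega\Subset\Omega'\Subset\Sigma$, the distance $d:=\dist(\Omega,\Sigma\setminus\Omega')$ is positive, and the geometry of the fixed shrinker $\Sigma$ has uniform $C^k$ bounds on the precompact set $\Omega'$ for every $k$. Under the hypothesis $\mathcal{Q}_1=0$, equation \eqref{h-equ} reduces to $\partial_\tau h=Lh+\mathcal{Q}_2$; the $C_3,C_4,C_5$ terms together with the second-order expression \eqref{Q_2second} show that $\mathcal{Q}_2=Q[h]$ has exactly the quasilinear structure required by Proposition~\ref{GHolder}, with smooth coefficients $f_i$ that remain bounded whenever $\|h\|_{C^1}$ is small enough to keep $\langle\N_F,\td\N\rangle$ away from $-1$. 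The first step is therefore to cover $\Omega$ by finitely many coordinate balls $B_r(x_\alpha)$ with $r$ comparable to $d$ and $B_{2r}(x_\alpha)\subset\Omega'$; in each chart the coefficient bounds on $L$ required by Proposition~\ref{GHolder} hold with $\Lambda=\Lambda(\Sigma,\Omega',m,\alpha)$.

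For case (2), where $\tau_1=\tau_0$, choose $\varepsilon$ small enough that the smallness of $\|h\|_{L^\infty}+\|h\|_{C^{m+1}(\tau_0)}$ forces $\|h\|_{C^1}\le 1/2$ on a short initial time interval, making the quasilinear form uniformly elliptic. Then apply Proposition~\ref{GHolder} directly on each cylinder $B_r(x_\alpha)\times[\tau_0,\tau_0+r^2]$ with initial datum $h(\cdot,\tau_0)$, obtaining a $C^m$ bound on the smaller cylinder $B_{r/2}(x_\alpha)\times[\tau_0,\tau_0+r^2]$. Cover $[\tau_0,\tau_2]$ by finitely many such time intervals, using the $C^{2m,\alpha}$ bound at the end of one interval as the initial datum for the next; the smallness persists because the conclusion of Proposition~\ref{GHolder} is linear in the relevant norms.

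For case (1), where $\tau_1>\tau_0$, we lack the initial $C^{m+1}$ hypothesis and must generate regularity via parabolic smoothing. Choose nested sets $\Omega\Subset\Omega_m\Subset\cdots\Subset\Omega_0=\Omega'$ and times $\tau_0=t_0<t_1<\cdots<t_m=\tau_1$. Starting from the $L^\infty$ smallness hypothesis, apply Proposition~\ref{GHolder} with parameter $m=1$ on parabolic cylinders inside $\Omega_0$ of appropriate scale, interpreting the $L^\infty$ bound as both the interior $C^0$ control and (trivially) the initial-data control at time $\tau_0$; this yields a $C^{2,\alpha}$ bound on $\Omega_1\times[t_1,\tau_2]$. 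Iterate: at step $j$, the $C^{2(j-1),\alpha}$ bound on $\Omega_{j-1}\times[t_{j-1},\tau_2]$ supplies a $C^{2j,\alpha}$ initial datum at time $t_{j-1}$, so Proposition~\ref{GHolder} with parameter $j$ upgrades this to a $C^{2j,\alpha}$ bound on $\Omega_j\times[t_j,\tau_2]$. After $\lceil m/2\rceil$ steps we reach $\Omega_m\Supset\Omega$ at time $\tau_1$, giving the claimed $C^m$ estimate.

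The main obstacle I expect is the bookkeeping of the bootstrap: each iteration demands $C^{2j,\alpha}$ smallness at a fresh initial slice, while the previous iteration delivers a bound whose constant accumulates at each step. Ensuring that the repeated constants do not violate the smallness hypothesis of Proposition~\ref{GHolder} forces a final choice of $\varepsilon$ depending on $n,m$ and on the cascade of prior constants; the final $C$ must absorb negative powers of $d$, $\tau_1-\tau_0$, and the parabolic time-scales used at each step. A secondary point is confirming at the base of the bootstrap that the quasilinear denominator $(1+\langle\N_F,\td\N\rangle)^2$ in \eqref{Q_2second} stays away from zero; this is automatic once the $m=1$ application of Proposition~\ref{GHolder} produces a $C^1$ bound, since the initial $L^\infty$ smallness of $h$ scales with $\varepsilon$.
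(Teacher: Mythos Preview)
Your plan is the same as the paper's: the paper simply records that Proposition~\ref{hHolder} follows from Proposition~\ref{GHolder} and gives no further details, and your localization into coordinate parabolic cylinders plus iteration in time is exactly the intended mechanism. Case~(2) is fine as you describe it.

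There is, however, a real gap at the base of your bootstrap for case~(1). You write that the $L^\infty$ bound can be ``(trivially)'' interpreted as the initial-data control required by Proposition~\ref{GHolder}, but that proposition with parameter $m=1$ demands $\|u_0\|_{C^{2,\alpha}}$, not merely $\|u_0\|_{L^\infty}$; there is nothing trivial here, and as written the first step of your iteration cannot be launched. The fix is standard: either (a) observe that the proof of Proposition~\ref{GHolder} (which invokes the Schauder theory in \cite{Kry}) yields, by the same iteration on the $a_k$, an interior-in-time estimate on $B_r\times[s^2/4,s^2]$ in terms of $\|u\|_{C^0(B_{2r}\times[0,s^2])}$ alone, with no initial-data term; or (b) multiply $h$ by a smooth time cutoff vanishing near $\tau_0$ and absorb the resulting $\chi'(\tau)h$ term into the forcing $F$ of Proposition~\ref{GHolder}, so that the new initial datum is identically zero. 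Either route gives the $C^{2m,\alpha}$ bound directly from $L^\infty$ for every $m$, so once the base step is repaired your layered bootstrap in nested domains $\Omega_j$ and times $t_j$ becomes unnecessary: a single application on interior parabolic cylinders of scale $r\sim\min(d,\sqrt{\tau_1-\tau_0})$ suffices.
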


On the other hand, we also have interior integral estimates.
Recall that we use $L^2_W$ and $H^k_W$ to be the weighted $L^2$ and $H^k$ spaces with the weight given by $e^{-\frac{|F|^2}4}.$

\begin{Lem}[(For the discussion in Ricci flow settings, see {\cite[Lemma 6.3]{Sto}}]
    \label{hIntegral}
Given $h$ satisfying \eqref{h-equ} on $\Omega'\times (\tau_0,\tau_2)$, let $\Omega\Subset \Omega'\Subset \Sigma$, $m\in\mathbb{N}$, $\tau_0\leq \tau_1< \tau_2,$ and $\mathcal{Q}_1=0$ on $\Omega'\times[\tau_0,\tau_2]$. 
Then there exist constants $\ep = \ep(n,m)>0$ and $C = C(n,\Sigma,F,\dist(\Omega,M\setminus\Omega'),\tau_1-\tau_0)<\infty$ 
with the following estimates.\\
(1) If $\tau_1>\tau_0$, then 
\begin{align*}
    \|h\|_{C^{m+2}(\Omega'\times[\tau_0,\tau_2])}<\ep\Rightarrow \sup_{\tau\in[\tau_1,\tau_2]}\|\na^mh(\tau)\|_{L^2_W(\Omega)} + \|h\|_{H_W^{m+1}(\Omega\times[\tau_1,\tau_2])}
    \leq C\|h\|_{H_W^m(\Omega'\times[\tau_0,\tau_2])}.
\end{align*}
(2) If $\tau_1=\tau_0$, then
\begin{align*}
 &\|h\|_{C^{m+2}(\Omega'\times[\tau_0,\tau_2])}<\ep\\ \notag\Rightarrow &\sup_{\tau\in[\tau_0,\tau_2]}\|\na^mh(\tau)\|_{L^2_W(\Omega)}+\|h\|_{H_W^{m+1}(\Omega\times[\tau_0,\tau_2])}
 \leq C\pr{\|h(\tau_0)\|_{H_W^m(\Omega')}+\|h\|_{H_W^m(\Omega'\times[\tau_0,\tau_2])}}.
\end{align*}
\end{Lem}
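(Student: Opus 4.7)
The plan is to prove both parts by weighted energy estimates of Caccioppoli type, following the template of \cite[Lemma 6.3]{Sto} in the Ricci flow setting. Since $\Omega'\Subset\Sigma$, all geometric quantities of $\Sigma$ on $\Omega'$, including $|A|$ and the weight $e^{-|F|^2/4}$, are uniformly bounded above and below, so the weight acts as a smooth bounded density and causes no essential difficulty. I would proceed by induction on $m$, the argument for case (2) differing from case (1) only in the choice of the temporal cutoff.

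For the base case $m=0$, pick a spatial cutoff $\phi\in C_c^\infty(\Omega')$ with $\phi\equiv 1$ on $\Omega$, and a temporal cutoff $\chi\in C^\infty([\tau_0,\tau_2])$ equal to $1$ on $[\tau_1,\tau_2]$; in case (1) choose $\chi$ to vanish at $\tau_0$, and in case (2) take $\chi\equiv 1$. Multiply \eqref{h-equ} (in which $\mathcal{Q}_1=0$) by $\phi^2\chi^2 h\,e^{-|F|^2/4}$ and integrate over $\Sigma\times[\tau_0,s]$. The $Lh$ contribution, integrated by parts using the self-adjointness of $\dL$ in $L^2_W$, produces $-\int\chi^2|\na(\phi h)|^2 e^{-|F|^2/4}$ plus a remainder controlled by $\int(|\na\phi|^2+\phi^2)h^2 e^{-|F|^2/4}$. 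The nonlinear term $\mathcal{Q}_2=C_3*h+C_4*\na h+C_5*\na^2 h$ is treated by handling $C_3*h$ directly, absorbing $C_4*\na h$ via Cauchy--Schwarz, and integrating the quasilinear piece $\int\phi^2\chi^2 hC_5\na^2 h\,e^{-|F|^2/4}$ by parts once to shift one derivative onto $\phi^2 hC_5e^{-|F|^2/4}$. The key point is that, by the explicit formulae \eqref{Q_2second} and \eqref{nota}, the effective coefficient of $\na^2 h$ is of size $O(\|h\|_{C^1})$, so the resulting $|\na h|^2$ terms come with a small prefactor $O(\ep)$ and can be absorbed into the Dirichlet term on the left. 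Integrating in $\tau$ yields the base case, with case (2) producing the extra boundary term $\|h(\tau_0)\|_{L^2_W(\Omega')}^2$ from the choice $\chi(\tau_0)=1$.

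For the inductive step, apply $\na^m$ to \eqref{h-equ}. The commutator $[\na^m,L]$ produces bounded-coefficient terms of order $\leq m+1$ on $\Omega'$, and $\na^m\mathcal{Q}_2$ produces a leading term $C_5'*\na^{m+2}h$ whose coefficient again has $C^0$ norm $O(\|h\|_{C^{m+2}})=O(\ep)$, together with lower-order tensorial expressions of order $\leq m+1$ whose coefficients are controlled in $C^0$ by $\|h\|_{C^{m+2}}$. Testing with $\phi^2\chi^2\na^m h\,e^{-|F|^2/4}$, integrating by parts once on the top-order piece, and absorbing the resulting $|\na^{m+1}h|^2$ with the small prefactor into the left-hand side gives, after a standard chain of nested subdomains $\Omega\Subset\Omega_1\Subset\cdots\Subset\Omega'$ to control the $|\na^{<m}h|$ terms via the inductive hypothesis, the stated inequality.

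The main obstacle is precisely this top-order piece: after $m$ differentiations the quasilinear $C_5*\na^2 h$ becomes one order higher than the $|\na^{m+1}h|^2$ Dirichlet term on the left. The only way to close the estimate is to combine integration by parts with the $C^{m+2}$ smallness of $h$, so that the leading prefactor is $O(\ep)$ and can be absorbed; the remaining work is routine but careful bookkeeping of cutoff derivatives, commutators, and the weight.
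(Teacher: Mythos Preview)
Your proposal is correct and follows essentially the same route as the paper. The only organizational difference is that the paper first rewrites the quasilinear top-order term in weighted divergence form, $\partial_\tau \nabla^m h = \dL\nabla^m h + \sum_{l\le m+1} B_l^m*\nabla^l h - \mathrm{div}_{|F|^2/4}(\hat h^m*\nabla^{m+1}h)$ with $\|\hat h^m\|_{C^0}\le\delta$ under the $C^{m+2}$ smallness, so that testing against $\phi^2\chi^2\nabla^m h$ yields the absorbable $\hat h^m|\nabla^{m+1}h|^2$ term directly; you instead integrate by parts on $C_5\nabla^{m+2}h$ after testing, which amounts to the same computation. One small imprecision: the leading coefficient of $\nabla^{m+2}h$ after differentiation is still the original $C_5$, hence of size $O(\|h\|_{C^1})$ rather than $O(\|h\|_{C^{m+2}})$; the full $C^{m+2}$ bound is used to control the lower-order commutator coefficients $\nabla^j C_5$ and the $B_l^m$.
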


\begin{proof}
    The proof is essentially the same as \cite[Lemma 6.3]{Sto}. 
    The only difference is that we need to assume that one more order derivative is bounded as the coefficient of \eqref{Q_2second} is more nonlinear.

From \eqref{h-equ}, \eqref{Q_2second}, and the assumption $\mathcal{Q}_1=0$ on $\Omega'\times[\tau_0,\tau_2]$, we have 
\begin{align*}
    \partial_th
    = \dL h +\sum_{l=1}^1B_l^0*\na^lh-{\rm div}_{\frac{|F|^2}{4}}\pr{\hat{h}^0*\na h},
\end{align*}
where $\dL = \Delta_{\frac{|F|^2}4}$ is the drift Laplacian and there exists $C_0,$ and for any $\delta>0$, there exists $\ep>0$ such that 
\begin{align*}
    \|h(\tau)\|_{C^2(\Omega')}<\ep\Rightarrow \|B_1^0\|_{L^\infty(\Omega')}+\|B_0^0\|_{L^\infty(\Omega')}<C_0,\,\|\hat{h}^0\|_{L^\infty(\Omega')}\leq \delta.
\end{align*}
By induction, we have 
\begin{align*}
    \partial_t\na^mh
    = \dL \na^mh +\sum_{l=1}^{m+1}B_l^m*\na^lh-{\rm div}_{\frac{|F|^2}{4}}\pr{\hat{h}^m*\na^{m+1} h},
\end{align*}
where there exists $C_0$ and for any $\delta>0$, there exists $\ep>0$ such that 
\begin{align*}
    \|h(\tau)\|_{C^{m+2}(\Omega')}<\ep\Rightarrow \sum_{l=0}^{m+1}\|B_l^m\|_{L^\infty(\Omega')}<C_m,\,\|\hat{h}^m\|_{L^\infty(\Omega')}\leq \delta.
\end{align*}
Then the rest of the proof follows from integration by parts and Young's inequality, cf. \cite[Lemma 6.3]{Sto}.
\end{proof}

Finally, we arrive at the local interior estimate we will use later.

\begin{Pro}[For the discussion in Ricci flow settings, see {\cite[Lemma 6.4]{Sto}}]\label{int1}
    Given $\Gamma>0$ and $h$ satisfying \eqref{h-equ} on $\Sigma\times (0,\tau_1)$, assume that $\mathcal{Q}_1=0$ on $\set{\frac{|F|^2}{4}<2\Gamma e^{\tau_0}} \times[0,\tau_1]$ and
    \begin{align*}
        \|h\|_{C^{0,\alpha}_{\rm hom,-1}(\Sigma\times [0,\tau_1])}&<\ep,\\
        \|h(\tau)\|_{L^2_W(\Sigma)}&\leq \mu e^{-\lambda_*(\tau+\tau_0)}\,\,\text{ for all}\,\,\tau\in[0,\tau_1], \text{ and}\\
        \sum_{l=0}^{4n+3} \abs{\na^lh}_g(x,0)&\leq C_0\bar{p}e^{-\lambda_*\tau_0}\,\,\text{ for all}\,\,x\in \set{\frac{|F|^2}{4}<2\Gamma e^{\tau_0}}\text{ and }\tau=0
    \end{align*}
    for some $\lambda_*>0.$
    Then for $\Gamma>\Gamma_0(n,\Sigma,F)$, $\ep<\ep_0(n,\Sigma,F,\Gamma),$ and $\tau_0>C(n,\Sigma,F,\lambda_*,\Gamma,C_0\bar{p})$, there exists $C_1=C_1(n,\Sigma,F,\lambda_*,\Gamma)<\infty$ such that 
    \begin{align*}
        \sum_{l=0}^2|\na^g h|_{g}(x,\tau)\leq C_1(\mu+C_0\bar{p})e^{-\lambda_*(\tau+\tau_0)}\,\,\text{ for all}\,\,(x,\tau)\in \set{\frac{|F|^2}{4}<\Gamma e^{\tau_0}}\times [0,\tau_1].
    \end{align*}
\end{Pro}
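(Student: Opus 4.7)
The plan is to adapt Stolarski's argument in \cite[Lemma 6.4]{Sto} to the mean curvature flow setting, combining the interior regularity estimates of Proposition \ref{hHolder} with the weighted integral estimates of Lemma \ref{hIntegral} to upgrade the weighted $L^2$ hypothesis into a pointwise $C^2$ bound with the correct exponential rate $e^{\lambda_*(\tau+\tau_0)}$.

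First, I would fix a nested decreasing family of subdomains $\Omega_0\supset\Omega_1\supset\cdots\supset\Omega_k$ lying between $\set{|F|^2/4<\Gamma e^{\tau_0}}$ and $\set{|F|^2/4<2\Gamma e^{\tau_0}}$, with fixed, $\tau_0$-independent gap distances (which is possible as soon as $\Gamma>\Gamma_0(n,\Sigma,F)$). Since $\mathcal Q_1=0$ on the larger set by hypothesis, Proposition \ref{hHolder} applies on each subdomain: part (2) near $\tau=0$, using the pointwise initial bound $\sum_{l\le 4n+3}|\na^lh|(x,0)\le C_0\bar p e^{\lambda_*\tau_0}$, and part (1) for later $\tau$. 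This upgrades the $L^\infty$ smallness $\|h\|_{L^\infty}<\ep$ into $C^{m+2}$ smallness on the successive $\Omega_j$'s, which in turn verifies the regularity hypothesis required to invoke Lemma \ref{hIntegral} at each step.

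Next, I would iterate Lemma \ref{hIntegral}(2) across the chain $\Omega_0\supset\cdots\supset\Omega_k$ to gain one weighted Sobolev derivative at each iteration. After $k\approx 4n+3$ steps this yields
\begin{align*}
    \|h\|_{H^{4n+3}_W(\Omega_k\times[0,\tau_1])}
    \le C\pr{\|h(\cdot,0)\|_{H^{4n+3}_W(\Omega_0)}+\|h\|_{L^2_W(\Omega_0\times[0,\tau_1])}}
    \le C(\mu+C_0\bar p)e^{\lambda_*(\tau_1+\tau_0)},
\end{align*}
where we have used both the $L^2_W$ hypothesis and the pointwise initial $C^{4n+3}$ bound, the latter estimated in $H^{4n+3}_W$ via the fact that the weighted volume of $\Omega_0$ is uniformly bounded in $\tau_0$ since $\Sigma$ is asymptotically conical.

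Finally, I would convert this weighted Sobolev bound to a pointwise $C^2$ bound by splitting $\set{|F|^2/4<\Gamma e^{\tau_0}}$ into an inner region $\set{|F|\le R_*}$, where the Gaussian weight is bounded below by a constant depending only on $R_*$ and weighted Sobolev embedding of $H^{4n+3}_W$ into $C^2$ closes directly, and an outer region $\set{R_*\le|F|^2/4\le\Gamma e^{\tau_0}}$, where the weight degenerates and a different argument is needed. On the outer region I would apply Proposition \ref{hHolder}(2) on parabolic cylinders based at $\tau=0$, giving $\|h\|_{C^2}\le C\pr{\epsilon+C_0\bar p e^{\lambda_*\tau_0}}$; the assumption $\tau_0>C(n,\Sigma,F,\lambda_*,\Gamma,C_0\bar p)$ then absorbs $\epsilon$ into $C_0\bar p e^{\lambda_*\tau_0}$. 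The main obstacle is precisely this mismatch of scales: $\set{|F|^2/4<\Gamma e^{\tau_0}}$ has Euclidean diameter $\sim e^{\tau_0/2}$ on which the weight can drop to $e^{-\Gamma e^{\tau_0}}$, so a naive weighted-to-unweighted Sobolev conversion on the entire set would give catastrophic factors $e^{\Gamma e^{\tau_0}}$; splitting into the bounded inner region (handled by the weighted bootstrap) and the outer region (handled by pointwise propagation from the initial data) is what makes the estimate close at the desired exponential rate.
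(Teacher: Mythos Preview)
Your overall strategy—first using Proposition~\ref{hHolder} to obtain enough $C^{m+2}$ smallness that Lemma~\ref{hIntegral} applies, then iterating Lemma~\ref{hIntegral} to upgrade the weighted $L^2$ control to high-order weighted Sobolev control, and finally converting to pointwise $C^2$ bounds—is exactly what the paper intends (it gives no proof and simply points to \cite[Lemma~6.4]{Sto}). Two steps in your execution, however, do not close.

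First, the displayed bound
$\|h\|_{H^{4n+3}_W(\Omega_k\times[0,\tau_1])}\le C(\mu+C_0\bar p)\,e^{\lambda_*(\tau_1+\tau_0)}$
is wrong. Since $\lambda_*<0$, the spacetime integral
$\int_0^{\tau_1}\|h(\tau)\|_{L^2_W}^2\,d\tau\le \mu^2\int_0^{\tau_1}e^{2\lambda_*(\tau+\tau_0)}\,d\tau$
is dominated by $\tau$ near $0$ and is only of order $e^{2\lambda_*\tau_0}$, not $e^{2\lambda_*(\tau_1+\tau_0)}$. In addition, the constants in Proposition~\ref{hHolder} and Lemma~\ref{hIntegral} depend on the length of the time interval, so applying them over all of $[0,\tau_1]$ produces constants that blow up with $\tau_1$. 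The remedy (and this is what \cite{Sto} does) is to work on unit time windows: for each fixed $\tau$ apply the iteration on $[\max(0,\tau-1),\tau]$, using part~(2) together with the initial pointwise bound when $\tau\le 1$ and part~(1) when $\tau\ge 1$. This yields the instantaneous estimate $\|h(\tau)\|_{H^m_W(\Omega_k)}\le C(\mu+C_0\bar p)\,e^{\lambda_*(\tau+\tau_0)}$ with $C$ independent of $\tau$ and $\tau_1$.

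Second, your outer-region patch fails for a sign reason. With $\lambda_*<0$, taking $\tau_0$ large sends $C_0\bar p\,e^{\lambda_*\tau_0}\to 0$, so the fixed quantity $\ep$ (which depends only on $n,\Sigma,F,\Gamma$) can never be absorbed into it—the inequality $\ep\lesssim C_0\bar p\,e^{\lambda_*\tau_0}$ goes the wrong way as $\tau_0\to\infty$. Thus the bound $C(\ep+C_0\bar p\,e^{\lambda_*\tau_0})$ coming from Proposition~\ref{hHolder}(2) is far larger than the target $C_1(\mu+C_0\bar p)\,e^{\lambda_*(\tau+\tau_0)}$, and your argument does not produce the claimed exponential rate on $\set{R_*\le\frac{|F|^2}{4}\le\Gamma e^{\tau_0}}$. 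You have correctly identified that naive weighted-to-unweighted Sobolev loses a factor like $e^{\Gamma e^{\tau_0}}$ on this set, but the substitute you propose does not fix it: the $L^\infty$ input alone carries no information at the scale $e^{\lambda_*(\tau+\tau_0)}$. One needs instead to run the integral bootstrap locally (on unit balls, on unit time windows) and keep track of the weight there, so that the Sobolev conversion picks up only the local value $e^{|F(x)|^2/4}$ of the inverse weight rather than its supremum over the whole region; combined with the $H^m_W$ bound at time $\tau$ this still leaves a factor that must be beaten, and this is where the precise choice of the number of derivatives and the largeness of $\tau_0$ relative to $\Gamma$ enter, as in \cite{Sto}.
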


Note that we can also use Proposition~\ref{GHolder} to get large scale interior estimates by rescaling. 
More precisely, consider a family of diffeomorphisms $\varphi_s\colon M\to M$ given by
\begin{align*}
    \partial_s\varphi_s
    = \na^g\pr{\frac{|F|^2}{4}}\circ \varphi_s
\end{align*}
and define 
$\mathcal{H}(t)=\varphi_t^*h(t).$
The evolution equation of $\mathcal{H}$ is then
\begin{align}\label{H-equ}
    \frac{\partial \mathcal{H}}{\partial t}(s)
    =&\pr{\Delta^g+|A|^2+\frac{1}{2}}\mathcal{H}(s) + \varphi_s^*\mathcal{Q}_1 + \varphi_s^*\mathcal{Q}_2.
\end{align}
This helps us to get rid of the unbounded coefficient term $-\frac{1}{2}\la F,\nabla^g f\ra$ in $L$. 
Also, note that the relation
$0\leq \abs{\na^g(\frac{|F|^2}{4})}^2
\leq \frac{|F|^2}{4}$ 
implies that for $s_1<s_2,$
\begin{align*}
\frac{|F|^2}{4}(\varphi_{s_1}(x))\leq \frac{|F|^2}{4}(\varphi_{s_2}(x))
\end{align*}
and
\begin{align*}
e^{-s_1}\frac{|F|^2}{4}(\varphi_{s_1}(x))\geq e^{-s_2}\frac{|F|^2}{4}(\varphi_{s_2}(x)).
\end{align*}
Moreover, from \cite[Lemma 3.8]{CS}, if we denote the link of the asymptotic cone of $\Sigma$ by $\mathcal{C}$, by parameterizing the shrinker outside a compact set by $F:(R,\infty)\times \Gamma\to \Sigma$  and denoting the family of diffeomorphisms generated by $X_t=\frac{1}{2(-t)}x^T$ by $\Phi_t$, 
we can define $\tilde{\Phi}_t=F^{-1}\circ \Phi_t\circ F$ and $\phi_t:(R,\infty)\times \Gamma\to (R,\infty)\times \Gamma,\,(r,\omega)\to ((-t)^{-\frac{1}{2}}r,\omega)$ with the following estimates.

\begin{Lem}[{\cite[Lemma 3.8]{CS}}]\label{comp}
    \begin{align*}
d_{g_{\mathcal{C}}}(\tilde{\Phi_t}(t,\theta),\phi_t(r,\theta))\lesssim\frac{1}{\sqrt{-t}\cdot r},\\
|D^{(j)}(\tilde{\Phi}-\phi_t)|(r,\theta)\lesssim \frac{1}{\sqrt{-t}\cdot r^{1+j-\eta}},
\end{align*}
for $j\geq 1$ and $\eta>0$.
\end{Lem}

As in \cite{CS}, we recall a non-standard interior Schauder estimates from \cite{Bra69}.

\begin{Lem}[Non-standard interior Schauder estimates \cite{Bra69,CS}]\label{nonsch}
Suppose $B_2\in \bb R^n$ and there are $C^{0,\alpha}$ functions $a_{ij},b_i,c,u,f\colon B_2\times [-2,0]\to\bb R$ such that
	$$\frac{\bd}{\bd t}u
	= a_{ij} D_{ij}^2 u
	+ b_i D_iu
	+ cu
	+ f.
	$$
	If the system is uniformly parabolic in the sense that
	$a_{ij}\xi_i\xi_j\ge \lambda|\xi|^2$
	for some $\lambda>0$ and 
	$$\sup_{t\in[-2,0]}
	\pr{
		\|a_{ij}(\cdot,t)\|_{C^{0,\alpha}(B_1)}
		+ \|b_{i}(\cdot,t)\|_{C^{0,\alpha}(B_1)}
		+ \|c(\cdot,t)\|_{C^{0,\alpha}(B_1)}
	}
	\le \Lambda$$
	for some $\Lambda>0,$ then for $T\in (-1,0],$
	$$
	\sup_{t\in[-1,T]} \|u(\cdot,t)\|_{C^{2,\alpha}(B_1)}
	\le C\sup_{t\in[-2,T]} \pr{
		\|u(\cdot,t)\|_{C^{0,\alpha}(B_2)}
		+ \|f(\cdot,t)\|_{C^{0,\alpha}(B_2)}
	}
	$$
	for some $C=C(n,\alpha,\lambda,\Lambda)$.
\end{Lem}

Combining Lemma~\ref{comp}, Lemma~\ref{nonsch} and Proposition~\ref{GHolder} implies the large scale interior estimates.

\begin{Lem}[For the discussion in Ricci flow settings, see {\cite[Lemma 6.8]{Sto}}]\label{int2}
      Given $h$ satisfying \eqref{h-equ} and $\mathcal{Q}_1=0$ on $\set{(x,\tau)\in\Sigma\times (0,\tau_1):\,\Gamma e^{\tau_0}<\frac{|F|^2}{4}<\gamma e^{\tau+\tau_0}},$ assume that 
    \begin{align*}
        |h|_g&\leq C_0e^{-k(\tau+\tau_0)} \pr{\frac{|F|^2}{4}(x)}^{k+\frac{1}{2}}\,\,\text{on}
        \,\,\set{(x,\tau)\in\Sigma\times (0,\tau_1):\,\Gamma e^{\tau_0}<\frac{|F|^2}{4}<\gamma e^{\tau+\tau_0}}, \text{ and}\\
        |\na^lh|_g(x,\tau_0)&\leq C_0e^{-k\tau_0} \pr{\frac{|F|^2}{4}(x)}^{k+\frac{1}{2}-\frac{l}{2}}\,\,\text{for all}\,\,
        l\in\{0,1,2,3\},\,\,x\in \set{\Gamma e^{\tau_0}<\frac{|F|^2}{4}<\gamma e^{\tau+\tau_0}}.
    \end{align*}
    Then for $\Gamma>\Gamma_0(n,\Sigma,F)$ and $C_0\gamma^k<C(n,\Sigma,F)$, there exists $C_1=C_1(n,\Sigma,F,k)<\infty$ such that 
    \begin{align*}
        |\na^l h|_{g}(x,\tau)\leq C_1C_0e^{-k(\tau+\tau_0)}\pr{\frac{|F|^2}{4}(x)}^{k+\frac{1}{2}-\frac{l}{2}}\,\,\text{on}\,\,\set{(x,\tau)\in\Sigma\times [0,\tau_1]:\,4\Gamma e^{\tau_0}\leq\frac{|F|^2}{4}(x)\leq\frac{1}{4}\gamma e^{\tau+\tau_0}}
    \end{align*}
    for all $l\in\{0,1,2\}.$
\end{Lem}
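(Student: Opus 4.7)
The plan is to apply the interior H\"older estimate Proposition~\ref{GHolder} after pulling back by the diffeomorphisms $\varphi_s$ introduced just before the statement, which recasts \eqref{h-equ} as \eqref{H-equ} with the unbounded drift term $-\tfrac{1}{2}\la F,\na^g f\ra$ eliminated. Fix $(x_0,\tau)$ in the target region and set $r := \pr{\frac{|F|^2}{4}(x_0)}^{1/2}$, so $2\sqrt{\Gamma}\,e^{\tau_0/2} \le r \le \tfrac{1}{2}\sqrt{\gamma}\,e^{(\tau+\tau_0)/2}$. I would then work on the parabolic cylinder $B_{cr}(x_0)\times[\tau-(cr)^2,\tau]$ for a small dimensional constant $c$. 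Using the two monotonicity inequalities for $\frac{|F|^2}{4}\circ\varphi_s$ recalled between Lemma~\ref{int1} and this lemma, the $\varphi_s$-trajectory of this cylinder during $[\tau_0,\tau]$ stays inside the annulus $\set{\Gamma e^{\tau_0}<\frac{|F|^2}{4}<\gamma e^{s+\tau_0}}$, where $\mathcal{Q}_1$ vanishes and both the $C^0$ and initial $C^3$ bounds on $h$ are in force; it is precisely to leave this slack that the buffer factors $4$ and $\tfrac14$ appear in the conclusion.

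Next I would rescale spatially by $r^{-1}$ so the cylinder becomes the standard $B_c\times[0,c^2]$. Because $\Sigma$ is asymptotically conical with curvature decay \eqref{A-decay}, the rescaled metric on $B_{cr}(x_0)$ is $C^{2m,\alpha}$-close to Euclidean with errors controlled by $\Sigma$ alone once $\Gamma$ is chosen large, so the linear coefficients of \eqref{H-equ} satisfy the scaling bounds of Proposition~\ref{GHolder} with $\Lambda = \Lambda(\Sigma)$ (the constant zeroth-order term $\tfrac{1}{2}$ can be absorbed by the substitution $\mathcal{H}\mapsto e^{-s/2}\mathcal{H}$). The nonlinear remainder $\mathcal{Q}_2$ from \eqref{h-equ} already has precisely the structure $f_1 u + f_2 \na u + f_3 \na u\otimes\na u + f_4 u\otimes\na^2 u$ required by that proposition, with the $f_i$ smooth and uniformly bounded after rescaling. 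The pointwise hypothesis gives
\begin{align*}
\|\mathcal{H}\|_{C^0} \le CC_0 e^{-k(\tau+\tau_0)} r^{2k} \le CC_0\gamma^k,
\end{align*}
and the $C^3$ hypothesis at $\tau_0$ supplies the initial-data term with the same bound. Taking $C_0\gamma^k<C(n,\Sigma,F)$ small enough furnishes the $\ep$-smallness hypothesis of Proposition~\ref{GHolder}; applying it with $m=1$ and undoing the spatial rescaling and the $\varphi_s$ pullback (which shuffle spatial derivatives by amounts uniformly bounded in $r$) yields the claimed bound on $\sum_{l=0}^{2}|\na^l h|_g$.

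The main obstacle is the simultaneous bookkeeping of three transformations at once: the time-dependent pullback $\varphi_s^{*}$ that removes the drift, the $r^{-1}$-rescaling that normalizes the parabolic cylinder, and the asymptotically conical geometry that makes all metric-dependent constants uniform in $r$. The most delicate check is that the $\varphi_s$-trajectories remain inside the annulus where $\mathcal{Q}_1=0$ and the $C^0$ hypothesis on $h$ holds throughout the parabolic time interval, which is exactly where the buffer factors $4$ and $\tfrac14$, combined with the two monotonicity inequalities for $\frac{|F|^2}{4}\circ\varphi_s$, are consumed.
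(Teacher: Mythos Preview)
Your proposal is correct and follows precisely the route the paper indicates: the paper gives no proof of this lemma beyond the sentence ``Thus, Proposition~\ref{GHolder} implies the large scale interior estimates'' immediately after introducing the $\varphi_s$-pullback and \eqref{H-equ}, and your outline is exactly a fleshed-out version of that remark---remove the drift via $\varphi_s^*$, rescale by the local scale $r=\bigl(\tfrac{|F|^2}{4}(x_0)\bigr)^{1/2}$, use the asymptotically conical decay \eqref{A-decay} to make the coefficients uniform, and invoke Proposition~\ref{GHolder}. One small remark: your time interval ``$[\tau_0,\tau]$'' should read $[\max(0,\tau-(cr)^2),\tau]$ (the initial-data hypothesis in the statement is at $\tau=0$ despite the paper's notation), and the ball $B_{cr}(x_0)$ should be taken in the pulled-back metric so that the trajectory containment via the two monotonicity inequalities lines up cleanly; but you already flag this bookkeeping as the main care point, and it goes through as you describe.
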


Similarly, in the outer region, we have the following estimate.
\begin{Lem}[For the discussion in Ricci flow settings, see {\cite[Lemma 6.9]{Sto}}]\label{int3}
      Given $h$ satisfying \eqref{h-equ} on $\Sigma\times (0,\tau_1),$ assume that 
    \begin{align*}
        |h|_g(x,\tau)&\leq \delta|F|\,\,
        \text{on}\,\,\set{(x,t)\in\Sigma\times [0,\tau_1]:\,\gamma e^{\tau+\tau_0}<\frac{|F|^2}{4}(x)},\\
        |\na^lh|_g(x,0)&\leq\delta|F|^{1-l},\,\,l\in\{0,1,2,3\}\,\,
        \text{on}\,\,x\in \set{\gamma e^{\tau_0}\leq\frac{|F|^2}{4}},\text{ and}\\
        |\mathcal{Q}_1|_g(x,\tau)) + |\na^g\mathcal{Q}_1|_g(x,\tau)&\leq \delta\,\,
        \text{on}\,\,\set{(x,\tau)\in\Sigma\times [0,\tau_1]:\,\gamma e^{\tau+\tau_0}<\frac{|F|^2}{4}(x)}.
    \end{align*}
    Then for $\delta<\delta_0(n,\Sigma,F)$ and $\tau_0>\tau(n,\Sigma,F,\gamma)$, there exists $C=C(n,\Sigma,F)<\infty$ such that 
    \begin{align*}
        |\na^l h|_{g}(x,\tau)\leq C\delta|F|^{1-l},l\in\{0,1,2\}\,\,\text{on}\,\,\set{(x,\tau)\in\Sigma\times [0,\tau_1]:\,4\gamma e^{\tau+\tau_0}<\frac{|F|^2}{4}(x)}.
    \end{align*}
\end{Lem}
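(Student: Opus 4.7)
The plan is to mirror the strategy sketched between Lemma~\ref{int2} and this statement: pull $h$ back by the diffeomorphism flow $\varphi_s$ generated by $\nabla^g(|F|^2/4)$, so that $\mathcal{H} = \varphi_s^* h$ satisfies the cleaner equation \eqref{H-equ}. This removes the unbounded drift term $-\frac{1}{2}\langle F, \nabla^g \cdot\rangle$ from $L$, which is the one coefficient that grows on the outer region. A further exponential substitution $h = e^{\tau/2}\tilde h$ absorbs the constant-order term $\frac{1}{2}h$, so the resulting parabolic operator reduces to $\Delta^g + |A|^2$, whose coefficients fit the framework of Proposition~\ref{GHolder}.

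Next, one localizes at a base point $x_0$ with $\frac{|F(x_0)|^2}{4} > 4\gamma e^{\tau+\tau_0}$ on a ball of radius $r \sim |F(x_0)|$ and rescales by $r$. The asymptotically conical curvature decay \eqref{A-decay} gives $|\nabla^i A|_g \lesssim r^{-i-1}$, hence after rescaling all geometric coefficients have uniformly bounded $C^{2m-2,\alpha}$ norm with the correct scaling weights. The monotonicity $e^{-s_1}|F|^2(\varphi_{s_1}(x)) \geq e^{-s_2}|F|^2(\varphi_{s_2}(x))$ recorded just before the lemma shows that the $\varphi_s$-pullback of the relevant parabolic cylinder around $x_0$ remains inside $\{\frac{|F|^2}{4} > \gamma e^{\tau'+\tau_0}\}$, where the hypotheses supply $|h|, |\mathcal{Q}_1|, |\nabla^g\mathcal{Q}_1| \leq \delta$ and $h(\cdot,0)$ is $C^3$-bounded by $\delta$. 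The requirement $\tau_0 > \tau(n,\Sigma,F,\gamma)$ guarantees both that $r$ is large enough to be in the asymptotic conical regime and that the monotonicity trapping is strict.

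Applying Proposition~\ref{GHolder} with $m=1$ on the rescaled equation then yields an estimate of the form $\|\mathcal{H}\|_{C^{2,\alpha}} \leq C(\|h\|_{C^0} + \|h(\cdot,0)\|_{C^{2,\alpha}} + r^2\|\mathcal{Q}_1\|_{C^\alpha}) \leq C\delta$ on a smaller cylinder, and pushing forward by $\varphi_s^{-1}$ recovers the stated bound on $h$. The nonlinear terms $\mathcal{Q}_2$ from \eqref{h-equ} fit the template $Q[u] = f_1 u + f_2 \nabla u + f_3 \nabla u \otimes \nabla u + f_4 u \otimes \nabla^2 u$ of Proposition~\ref{GHolder} by direct inspection of \eqref{Q_2second} and the explicit expressions for $C_3, C_4, C_5$, provided $\delta$ is taken smaller than the smallness threshold $\varepsilon$ of that proposition. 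The main obstacle is verifying that the rescaled forcing contribution $r^2 \|\mathcal{Q}_1\|_{C^\alpha}$ does not blow up under the large factor $r^2 \sim |F(x_0)|^2$; this relies on the explicit form $\mathcal{Q}_1 = \sqrt{1-t}\langle \partial_tF_0 - \Delta^{g_0}F_0, \N_{F_0}\rangle$ and on measuring $\mathcal{Q}_1$ in a weighted norm that records its asymptotic decay in the outer region, so that the rescaling weight and the intrinsic decay of $\mathcal{Q}_1$ exactly cancel and leave a uniform bound of order $\delta$.
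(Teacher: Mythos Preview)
Your overall strategy---pulling back by the flow $\varphi_s$ of $\nabla^g(|F|^2/4)$ to remove the unbounded drift, using the monotonicity of $e^{-s}f(\varphi_s(\cdot))$ to trap the relevant parabolic cylinder inside the outer region, and then invoking Proposition~\ref{GHolder}---is precisely the paper's argument: the lemma is offered with the single word ``Similarly'' after Lemma~\ref{int2}, pointing back to the discussion around \eqref{H-equ} and the two monotonicity inequalities.

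There is, however, a genuine gap in the last paragraph. You choose the localization scale $r\sim|F(x_0)|$ and then argue that the resulting factor $r^2\|\mathcal{Q}_1\|_{C^\alpha}$ is absorbed by ``the intrinsic decay of $\mathcal{Q}_1$ in the outer region.'' But the hypotheses of Lemma~\ref{int3} give only $|\mathcal{Q}_1|+|\nabla^g\mathcal{Q}_1|\le\delta$, with no decay in $|F|$; the decay $|\mathcal{Q}_1|\lesssim(\Gamma_0 e^{\tau+\tau_0})^{-1/2}$ you are implicitly invoking belongs to the application in Section~\ref{sec:6}, not to the lemma as stated. At scale $r\sim|F(x_0)|$ the forcing contribution is then $r^2\delta\sim|F(x_0)|^2\delta$, which is uncontrolled, and the same blow-up contaminates the weighted initial norm $\|h(\cdot,0)\|_{C^{2,\alpha}}$.

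The fix is to work at unit scale $r=O(1)$ after the pullback. On the asymptotically conical end the metric $g$ is uniformly controlled on unit intrinsic balls by \eqref{A-decay}, the zeroth-order coefficient $|A|^2+\tfrac12$ is bounded, and the forcing term satisfies $r^2\|\varphi_s^*\mathcal{Q}_1\|_{C^{0,\alpha}}\lesssim\delta$ directly from the stated hypotheses (the $C^1$ bound on $\mathcal{Q}_1$ controls the H\"older seminorm on a unit ball). The assumption $\tau_0>\tau(n,\Sigma,F,\gamma)$ is exactly what guarantees that a unit-scale ball around $y_0=\varphi_\tau^{-1}(x_0)$ sits inside $\{f>\gamma e^{\tau_0}\}$, so that the monotonicity argument keeps $\varphi_s(B_1(y_0))$ in the region $\{f>\gamma e^{s+\tau_0}\}$ for all $s\le\tau$. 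With this choice no cancellation is needed and Proposition~\ref{GHolder} yields $\sum_{l\le2}|\nabla^l h|\le C\delta$ as claimed.
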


\section{\bf Barrier functions}\label{sec:5}
In this section, we will use a barrier function argument to get the corresponding $C_0$ estimate. 
Since the original evolution equation \eqref{h-equ} is highly nonlinear, to construct barriers, we have to find the correct operator first. 
We will consider similar barrier functions as in the Ricci flow case \cite{Sto} while we need to handle the nonlinearity of mean curvature flows. 
Recall that in normal coordinates, the second order term in $\mathcal{Q}_2$ is 
\begin{align}\label{1storder}
C^{ab}(F,\na^gF,h,\na ^gh)\partial_a\partial_bh\cdot\N_F
:=\frac{\tilde{h}^{ab}\pr{1-|\nabla h|^2} - \pr{|\nabla h|^2+\tilde{h}^{cd}\partial_ch\,\partial_dh} g^{ab}}{1+\la \N_F,\td \N\ra}\partial_a\partial_bh\cdot\N_F.
\end{align}
We look at the lower order terms of $\mathcal{Q}_2$ in \eqref{h-equ}. 
Using \eqref{nota}, if we denote $C=C(F,\na F,\na^2F,h,\na h)$ to be any tensor depending on $F,\na F,\na^2F,h,$ and $\na h$,
we get 
\begin{align*}
   &C_3h+C_4*\na h\\
   =&C*(\na h \N_F+hA)*(\na h \N_F+hA)+C*\na h*\na h* A+C*hA*A\\ 
   &+ hC*(\na h \N_F+hA)*(\na h \N_F+hA)\\ 
   &+ (h+C)(C*\na h+C*\na h \N_F+C*hA)*(\na h+C*\na h \N_F+C*hA)\\ 
   &+ (ch*A+C*\na h*\na h)(C*\na h*A+hC*\na A+hC*A*A)\\ 
   & +\na h*(h*\na A+A*\na h+h\na h*A*A+h^2A*\na A)*(C+h*A+C*\na h*\na h+C*hA),
\end{align*}
and we can simplify the expression to get
\begin{align*}
     C_3h+C_4*\na h=C*\na h*\na h+C*hA*A+C*\na h*hA+Ch*\na A.
\end{align*}
Thus, we know 
\begin{align*}
    | C_3h+C_4*\na h|\leq C|\na h|^2+C|h||A|^2+C|h||\na A|,
\end{align*}
so we have 
\begin{align*}
    \partial_\tau|h|^2
    &=2\la h\,\N_F,\partial_th\,\N_F\ra = 2h(Lh+\mathcal{Q}_1+\mathcal{Q}_2)\\ 
    &\leq \pr{\Delta-\na_{\na^g \frac{|F|^2}{4}}} |h|^2 - 2|\na h|^2
    + \pr{2|A|^2+1}h^2
    + C^{ab}\na_a\na_bh^2-2C^{ab}\na_ah\na_bh\\  
    &\quad +2h\mathcal{Q}_1
    +h \pr{C|\na h|^2+C|h||A|^2+C|h||\na A|}.
\end{align*}
Using \eqref{1storder}, given 
$\|h\|_{C^{1,\alpha}_{\rm hom,-1}}\leq \ep<\ep_n,$
we have 
\begin{align*}
    |C^{ab}|\leq C_n\pr{|hA|_{C^0}+|\na h|^2_{C^0}},
\end{align*}
and
\begin{align*}
     \partial_\tau|h|^2
     &\leq \pr{g^{ab}+C^{ab}} \na_a\na_bh^2
     -\na_{\na^g \frac{|F|^2}{4}}|h|^2 +\pr{C_n|A|^2+C_n|\na A|+1}h^2
     +2|h||\mathcal{Q}_1|.
\end{align*}
These imply that we can look at the operator 
\begin{align}\label{oper}
    Qf
    :=\pr{g^{ab}+C^{ab}}\na_a\na_bf
    -\na_{\na^g \frac{|F|^2}{4}}f
    + \pr{C_n|A|^2+C_n|\na A|+1}f
    +2\sqrt{|f|}|\mathcal{Q}_1|.
\end{align}
We now take $f=\frac{|F|^2}{4}.$ 
From the shrinker equation, we have 
$\Delta \frac{|F|^2}{4}-\na_{\na^g \frac{|F|^2}{4}}\frac{|F|^2}{4}=\frac{n}{2}-f$. 
Hence, we obtain
\begin{align*}
    \pr{g^{ab}+C^{ab}} \na_a\na_bf^k
    -\na_{\na^g \frac{|F|^2}{4}}f^k
    = kf^{k-1}\pr{\frac{n}{2}-f} 
    + k(k-1)f^{k-1}\frac{|\na f|^2}{f}
    +C^{ab}\na_a\na_bf^k.
\end{align*}
As 
\begin{align}\label{Cest}
    \abs{C^{ab}\na_a\na_bf^k}
    & = \abs{kf^{k-1}C^{ab}\na_a\na_bf+k(k-1)C^{ab}f^{k-2}\na_af\na_bf} \\
    & \leq \abs{kf^{k-1}} \cdot
    \abs{C^{ab}}
    \pr{|A_{ab}|f^{\frac{1}{2}}+(k-1)|F_a||F_b|},\nonumber
\end{align}
similarly as in \cite{Sto}, we take 
\begin{align*}
    u=\begin{cases}
        e^{(-k+1)(\tau+\tau_0)} \pr{Df^k-Bf^{k-1}}
        &\text{on }
        \set{(x,\tau)\in \Sigma\times[0,\infty):\Gamma<f(x)<\gamma_1 e^{\tau+\tau_0}}\\
        e^{\tau+\tau_0}\pr{a-Ef^{-1}}
        &\text{on }
        \set{(x,\tau)\in \Sigma\times[0,\infty):\gamma_2 e^{\tau+\tau_0}<f(x)<\Gamma e^{\tau+\tau_0}}
    \end{cases}
\end{align*}
for some positive numbers $a,B, D,$ and $E.$

In general, using \eqref{Cest} and $0\leq\frac{|\na f|^2}{f}\leq 1,$ we calculate 
\begin{align*}
   & e^{(m-1)(\tau+\tau_0)}(\partial_\tau-Q)u\\
   =& e^{(m-1)(\tau+\tau_0)}(\partial_\tau-Q) \pr{e^{(-m+1)(\tau+\tau_0)}\pr{Df^m-Bf^{m-1}}}\\ 
   =&-\frac{n}{2} \pr{mDf^{m-1}-(m-1)Bf^{m-2}} + Bf^{m-1} -(m(m-1) Df^{m-1} - (m-1)(m-2)Bf^{m-2}) \frac{|\na f|^2}{f}\\  
   &- C^{ab} \na_a\na_b \pr{Df^m-Bf^{m-1}} -\pr{C_n|A|^2+C_n|\na A|} \pr{Df^m-Bf^{m-1}} -e^{(m-1)(\tau+\tau_0)} 2\sqrt{|u|}|\mathcal{Q}_1|\\ 
   \ge& f^{m-1}\left(B + \frac{n(m-1)B}{2f} -\frac{nmD}{2} - m(m-1)D -Dm \abs{C^{ab}} \abs{A_{ab}} f^{\frac{1}{2}} - Dm(m-1)\abs{C^{ab}}\right.\\ 
   &\left.-B(m-1) \abs{C^{ab}} \abs{A_{ab}} f^{-\frac{1}{2}} - \frac{B(m-1)(m-2)\abs{C^{ab}}}{f} - \pr{C_n|A|^2+C_n|\na A|} (Df+B)) \right)\\ 
   &-e^{(m-1)(\tau+\tau_0)}2\sqrt{|e^{(-m+1)(\tau+\tau_0)}(Df^m-Bf^{m-1})|}|\mathcal{Q}_1|.
\end{align*}
Therefore, as $\mathcal{Q}_1$ is 0 on 
$\set{(x,\tau)\in \Sigma\times[0,\infty):\Gamma e^{\tau_0}<f(x)<\gamma_1 e^{\tau+\tau_0}}$, there exist constants $\ep_n,\Gamma,$ and $\gamma_1$ such that given 
\begin{align*}
    &\|h\|_{C^{1,\alpha}_{\rm hom,-1}}\leq \ep<\ep_n,\quad 
    B-D\pr{m\pr{m+\frac{n}{2}} + C_n\sup_\Sigma\pr{f|A|^2+f|\na A|}}
    \geq w>0, \\
    &\Gamma \geq \Gamma(n,B,m,w),
    \text{ and }\gamma_1\leq \gamma_1(n,\Sigma,F,B,D,m,w),
\end{align*}
$u$ satisfies $(\partial_t-Q)u\geq 0$ on 
$\set{(x,\tau)\in \Sigma\times[0,\infty):\Gamma e^{\tau_0}<f(x)<\gamma_1 e^{\tau+\tau_0}}$.

Moreover, if 
\begin{align*}
    \mathcal{Q}_1\leq \frac{C_0}{\Gamma}e^{-\frac{\tau+\tau_0}{2}},
\end{align*}
then on 
$\set{(x,\tau)\in \Sigma\times[0,\infty):\gamma_2 e^{(\tau+\tau_0)}<f(x)<\Gamma e^{(\tau+\tau_0)}},$
\begin{align*}
    e^{-(\tau+\tau_0)}2\sqrt{|e^{(\tau+\tau_0)}(a-Bf^{-1})|}
    \abs{\mathcal{Q}_1}
    \leq \frac{2\sqrt{a}}{e^{(\tau+\tau_0)}\Gamma}
    \leq 2\frac{\sqrt{a}}{f}.
\end{align*}
Plugging in the expression, there exists $\tau_0$ such that given 
\begin{align*}
    &|h|_{C^{\rm hom}_{1;1}}\leq \ep<\ep_n,\quad 
    B - a\pr{C_n\sup_\Sigma \pr{f|A|^2+f|\na A|}}-C_0\sqrt{a}\geq w>0,\text{ and}\\
    &\tau_0\geq \tau_0(n,\Sigma,F,C_0,B,\gamma_2,\ep_n),
\end{align*}
$u$ satisfies $(\partial_\tau-Q)u\geq 0$ on 
$\set{(x,\tau)\in \Sigma\times[0,\infty):\gamma_2 e^{(\tau+\tau_0)}<f(x)<\Gamma e^{(\tau+\tau_0)}}$.

Therefore, we have the following estimates.

\begin{Pro}[For the discussion in Ricci flow settings, see {\cite[Lemmas 6.5,6.6,6,7]{Sto}}]\label{barrier}
    Given $|h|_{C_1}\leq \ep<\ep_n,$ 
    we have $(\partial_\tau-Q)h^2\leq 0,$ 
    where $Q$ is given by \eqref{oper}.
Moreover, if $\mathcal Q_1$ satisfies
\begin{align*}
    \abs{\mathcal Q_1}
    \begin{cases}
        =0
        & \text{on }\set{(x,\tau)\in \Sigma\times[0,\infty):\Gamma e^{\tau_0}<f(x)<\gamma_1 e^{(\tau+\tau_0)}}\\
        \leq \frac{C_0}{\Gamma}e^{-\frac{(\tau+\tau_0)}{2}}
        & \text{on }\set{(x,\tau)\in \Sigma\times[0,\infty):\gamma_2 e^{(\tau+\tau_0)}<f(x)<\Gamma e^{(\tau+\tau_0)}}
    \end{cases},
\end{align*}
then  
\begin{align*}
    u:=
    \begin{cases}
        e^{(-k+1)(\tau+\tau_0)}(Df^k-Bf^{k-1})
        &\text{on }\set{(x,\tau)\in \Sigma\times[0,\infty):\Gamma e^{\tau_0}<f(x)<\gamma_1 e^{(\tau+\tau_0)}}\\
        e^{(\tau+\tau_0)}(a-Ef^{-1})
        &\text{on }\set{(x,\tau)\in \Sigma\times[0,\infty):\gamma_2 e^{(\tau+\tau_0)}<f(x)<\Gamma e^{(\tau+\tau_0)}}
    \end{cases}
\end{align*}
satisfies 
$(\partial_t-Q)u\geq 0$ on its domain if 
\begin{align*}
    &\|h\|_{C^{1,\alpha}_{\rm hom,-1}}\leq \ep<\ep_n,\quad 
    B - D\pr{m(m+\frac{n}{2}) 
    +C_n\sup_\Sigma \pr{f|A|^2+f|\na A|}}
    \geq w>0\\
    &\Gamma \geq \Gamma(n,B,m,w),\quad\gamma_1\leq \gamma_1(n,\Sigma,F,B,D,m,w),\\
    &E - a\pr{C_n\sup_\Sigma \pr{f|A|^2+f|\na A|}}-C_0\sqrt{a}\geq w>0, \text{ and }
    \tau_0\geq \tau_0(n,\Sigma,F,C_0,B,\gamma_2,\ep_n).
\end{align*}
\end{Pro}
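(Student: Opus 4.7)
The proof naturally splits into two parts corresponding to the two claims of the proposition, and the bulk of the calculation has already been carried out in the text preceding the statement. The plan is to assemble those computations cleanly and check that the sign conditions indicated by the parameter hypotheses really close up.

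For the first claim, I would start from \eqref{h-equ}, multiply through by $2h$, and reorganize terms. The linear part $2hLh$ rewrites as $\pr{\Delta-\nabla_{\nabla f}}h^2 - 2|\nabla h|^2 + \pr{2|A|^2+1}h^2$ with $f=|F|^2/4$. The second-order contribution of $\mathcal{Q}_2$, identified in \eqref{1storder}, supplies $C^{ab}\nabla_a\nabla_b h^2 - 2C^{ab}\nabla_a h\,\nabla_b h$, which is exactly what is needed to produce the modified principal coefficient $g^{ab}+C^{ab}$ in $Q$. The remaining lower-order pieces of $\mathcal{Q}_2$, expanded via \eqref{nota}, were already shown in the preceding discussion to be bounded by $C\pr{|\nabla h|^2+|h||A|^2+|h||\nabla A|}$, which is absorbed by the zeroth-order coefficient $\pr{C_n|A|^2+C_n|\nabla A|+1}h^2$ together with the favorable gradient term $-2|\nabla h|^2$. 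Collecting these and observing $2|h||\mathcal{Q}_1|=2\sqrt{h^2}|\mathcal{Q}_1|$ gives exactly $\partial_\tau h^2 \leq Q(h^2)$.

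For the second claim, I would treat the two regions separately. On the inner region $\set{\Gamma e^{\tau_0}<f<\gamma_1 e^{\tau+\tau_0}}$, where $\mathcal{Q}_1=0$, plug $u=e^{(-k+1)(\tau+\tau_0)}\pr{Df^k-Bf^{k-1}}$ into $(\partial_\tau-Q)u$ and apply the shrinker identity $\Delta f - \nabla_{\nabla f}f = n/2 - f$ together with $0\leq |\nabla f|^2/f\leq 1$ and the bound \eqref{Cest} on $|C^{ab}\nabla_a\nabla_b f^k|$. After factoring out $f^{k-1}$, the dominant positive contribution is $B$, which must beat both the polynomial corrections in $D$ and the $C^{ab}$-correction; the former is exactly the hypothesis $B - D\pr{k(k+n/2) + C_n\sup_\Sigma\pr{f|A|^2+f|\nabla A|}}\geq w>0$, while the latter requires $|h|_{C^1}$ small and $\Gamma$, $\gamma_1$ chosen appropriately. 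On the outer region $\set{\gamma_2 e^{\tau+\tau_0}<f<\Gamma e^{\tau+\tau_0}}$, plug in $u=e^{\tau+\tau_0}\pr{a-Ef^{-1}}$; now $2\sqrt{|u|}|\mathcal{Q}_1|$ is nonzero but is bounded by $2\sqrt{a}/f$ using the hypothesis $|\mathcal{Q}_1|\leq (C_0/\Gamma)e^{-(\tau+\tau_0)/2}$, and can be absorbed into the $Ef^{-1}$ term once $E - aC_n\sup_\Sigma\pr{f|A|^2+f|\nabla A|} - C_0\sqrt{a}\geq w$ and $\tau_0$ is sufficiently large.

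The main technical obstacle is the sheer number of nonlinear correction terms arising from $\mathcal{Q}_2$, which reflects the third difficulty flagged in the introduction: the shrinker quantity is more nonlinear than the Ricci curvature. Controlling all these corrections so that the explicit positive leading term dominates crucially uses the curvature decay \eqref{A-decay}, which ensures $\sup_\Sigma\pr{f|A|^2+f|\nabla A|}<\infty$, together with the $C^1$-smallness of $h$ to keep the $C^{ab}$ coefficients small and the factor $\pr{1+\pair{\N_F,\td\N}}^{-2}$ in \eqref{1storder} close to $1$.
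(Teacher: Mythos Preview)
Your proposal is correct and takes essentially the same approach as the paper: the paper carries out exactly these computations in the text of Section~\ref{sec:5} \emph{preceding} the proposition (the subsolution inequality for $h^2$, then the supersolution computation for each piece of $u$ via the shrinker identity, $0\le|\nabla f|^2/f\le 1$, and the bound \eqref{Cest}), and the proposition itself is stated as a summary of those calculations rather than given a separate proof block. Your outline matches that argument step for step.
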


This proposition seems very technical, but it enables us to easily control the $C^0$ norm of $h$ using this barrier function.
By combining Proposition~\ref{int1}, Lemma~\ref{int2}, and Lemma~\ref{int3}, we can prove the global $C^2$-improvement theorem following the same argument in \cite{Sto}. 
Note that here we need $C^1$-bounds instead of $C^0$-bounds in the mean curvature flow setting.

\begin{Thm}[For the discussion in Ricci flow settings, see {\cite[Theorem 6.1]{Sto}}]\label{impro}
    Given $\Gamma>0$ and $h$ satisfying \eqref{h-equ} on $\Sigma\times (0,\tau_1)$ and $\tau=-\log(1-t)$, assume 
    \begin{align*}
        \|h\|_{C^{1,\alpha}_{\rm hom,-1}(\Sigma\times [0,\tau_1])}&<\ep,\\
        h(0)\
        &= \eta\pr{\frac{1}{\gamma_0e^{\tau_0}}f(x)} \sum_{i=1}^mp_if_i 
        \text{ for }|\p|\leq \bar{p}e^{-\lambda_*\tau_0},\\
        \|h(\tau)\|_{L_W^2(\Sigma)}&\leq \mu e^{-\lambda_*(\tau+\tau_0)} 
        \text{ for all }\tau\in[0,\tau_1],\\ \supp(\mathcal{Q}_1(\cdot,\tau))
        &\subseteq 
        \set{\frac{\Gamma_0e^{(\tau+\tau_0)}}{4}\leq f\leq \Gamma_0e^{(\tau+\tau_0)}},\\
        |\mathcal{Q}_1(\cdot,\tau)|
        &\leq  C\pr{\frac{1}{\Gamma_0e^{(\tau+\tau_0)}}}^{\frac{1}{2}}, \text{ and}\\
        |\na \mathcal{Q}_1(\cdot,\tau)|
        &\leq  C\pr{\frac{1}{\Gamma_0e^{\tau+\tau_0}}}
    \end{align*}
    for some $\lambda_*>0.$
    Then for any 
    $\delta < \ep$, 
    $\Gamma > \Gamma_0(n,\Sigma,F)$, 
    $\ep<\ep_0(n,\Sigma,F,\Gamma)$, 
    $|\p| < C(n,\Sigma,F,\lambda_*)$, 
    $\mu < \mu_0(n,\Sigma,F,\lambda_*)$, 
    $\gamma < \gamma_0 (n,\Sigma,F,\lambda_*,\delta),$ and 
    $\tau_0 > C(n,\Sigma,F,\lambda_*,\Gamma,\bar{p},\ep,\delta)$, 
    there exist $W=W(n,\Sigma,F,\lambda_*)<\infty$ and $W'=W'(n,\Sigma,F)<\infty$ such that 
    \begin{align*}
       |\na^l h|&\leq W\tilde{r}^{2\lambda_*+1-l}e^{-\lambda_*{(\tau+\tau_0)}},\,\,l\in\{0,1,2\} 
        \text{ for }(x,\tau)\in \Sigma\times[0,\tau_1],\text{ and}\\
        |\na^l h|
        &\leq W'\delta\tilde{r}^{1-l},\,\,l\in\{0,1,2\} 
        \text{ for }(x,\tau)\in \Sigma\times[0,\tau_1].
    \end{align*}
\end{Thm}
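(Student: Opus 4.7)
The plan is to adapt Stolarski's three-region barrier argument to the mean curvature flow setting, combining the barrier construction in Proposition~\ref{barrier} with the local interior estimates in Proposition~\ref{int1}, Lemma~\ref{int2}, and Lemma~\ref{int3}, together with the eigenfunction decay of Theorem~\ref{thm:eigen-decay}. Writing $f=|F|^2/4$, the strategy is to split $\Sigma\times[0,\tau_1]$ into an inner region $\{f<\Gamma e^{\tau_0}\}$, a middle region $\{\Gamma e^{\tau_0}\le f\le \gamma e^{\tau+\tau_0}\}$ on which $\mathcal{Q}_1$ vanishes, and an outer region $\{f\ge \gamma e^{\tau+\tau_0}\}$; prove pointwise bounds on $|h|$ in each region; and then upgrade to $C^2$-bounds via the local parabolic regularity theory of Section~\ref{sec:4}.

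First, I would apply Theorem~\ref{thm:eigen-decay} to each $L$-eigenfunction $f_i$ appearing in the decomposition $h(0)=\eta(f/(\gamma_0 e^{\tau_0}))\sum p_i f_i$, and then bootstrap via the elliptic equation $Lf_i=-\lambda_i f_i$ and standard Schauder theory to obtain derivative bounds $\sum_{l\le 4n+3}|\nabla^l h|(x,0)\le C\bar p\, e^{\lambda_*\tau_0}\, f(x)^{|\lambda_*|}$. In the inner region, combining this with the assumed $L^2_W$-control $\|h(\tau)\|_{L^2_W}\le \mu e^{\lambda_*(\tau+\tau_0)}$ places us within the hypotheses of Proposition~\ref{int1}, which yields $\sum_{l=0}^2|\nabla^l h|\le C_1(\mu+\bar p)e^{\lambda_*(\tau+\tau_0)}$ on $\{f<\Gamma e^{\tau_0}\}$; since $f$ is bounded on this region, this already gives the stated bound there.

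On the middle region I would invoke the first barrier of Proposition~\ref{barrier}, taking $u=e^{(-k+1)(\tau+\tau_0)}(Df^k-Bf^{k-1})$ with $k$ tuned so that $u$ is comparable to $f^{2|\lambda_*|}e^{2\lambda_*(\tau+\tau_0)}$. Since $(\partial_\tau-Q)h^2\le 0$ and $(\partial_\tau-Q)u\ge 0$ by that proposition, the parabolic maximum principle yields $h^2\le u$, provided the comparison holds on the parabolic boundary: the inner boundary is covered by the previous step, the initial slice by the eigenfunction decay bound, and the outer boundary by the argument below. Lemma~\ref{int2} then upgrades this pointwise bound to the claimed $C^2$-bound. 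On the outer region I would use the second barrier of Proposition~\ref{barrier}, $u=e^{\tau+\tau_0}(a-Ef^{-1})$, with $a$ comparable to $\delta^2$; the same maximum-principle comparison gives $|h|^2\le Ca$, which in particular dominates the middle-region barrier at the shared interface, closing the circular dependence. Lemma~\ref{int3} then turns this control into the $C^2$-bound $\le W'\delta$. Assembling the three regions yields both displayed inequalities.

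The main obstacle is the careful calibration of the parameters $\Gamma,\gamma,\tau_0,\mu,\bar p,B,D,E,a$ so that the three barriers simultaneously satisfy the supersolution inequality provided by Proposition~\ref{barrier}, dominate $h^2$ at both interfaces between regions, and reproduce exactly the growth exponent $|\lambda_*|$. A complication specific to mean curvature flow (absent in the Ricci flow treatment) is that the coefficients of the operator $Q$ in \eqref{oper} depend on $\|h\|_{C^1}$ through the nonlinearity $\tilde h^{ab}$ in \eqref{nota}; consequently one must propagate the $C^1$-smallness hypothesis across the interfaces and check that each application of the maximum principle respects this feedback. This is the analog of the third difficulty flagged in the introduction, and it is the reason the hypothesis in this theorem is stated in $C^1$ rather than $C^0$.
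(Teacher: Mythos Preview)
Your proposal is correct and follows essentially the same approach as the paper. The paper does not give a detailed proof of this theorem at all: it simply states that by combining Proposition~\ref{barrier} with Proposition~\ref{int1}, Lemma~\ref{int2}, and Lemma~\ref{int3} one can follow Stolarski's argument verbatim, noting exactly (as you do) that the mean curvature flow nonlinearity forces the hypothesis to be a $C^1$-bound rather than a $C^0$-bound. Your sketch of the three-region decomposition, the use of Theorem~\ref{thm:eigen-decay} for the initial slice, the barrier comparison via the maximum principle, and the subsequent upgrade to $C^2$ via the interior estimates is precisely the intended argument, and in fact supplies more detail than the paper itself.
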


\section{\bf Wa\.zewski's box argument} \label{sec:6}
We are going to finish the proof of the main theorem in this section.
For the concerned asymptotically conical shrinker $\Sigma,$ take $\tau_0,\Gamma_0>0$ sufficiently large. 
From Section~\ref{double}, we can construct 
$M = M_{\Sigma,\tau_0,\Gamma_0},$ 
a doubling of a large compact set $\{f(x)\leq 2e^{\tau_0}\Gamma_0\}$ of $\Sigma$. 
Based on Proposition~\ref{prop:eigenfunctions}, we take an $L^2_W$-orthonormal eigenbasis $\{h_i\}_{i\in \mathbb{N}}$ of $L$ on $\Sigma$ and the corresponding eigenvalues $\{\lambda_i\}_{i\in\mathbb{N}}$. 
Take $\lambda_*>0$ and $m$ such that $\lambda_{m}<\lambda_*<\lambda_{m+1}$.

We fix a compactly supported bump function $\eta:(0,\infty)\to [0,1]$ such that $\eta(x)=1$ for $x<\frac{1}{2}$ and $\eta(x)=0$ for $x>1$. 
Take any $\p=(p_1,p_2,\cdots,p_m)\in \RR^m,$ 
and then we can perturb the immersion of $M$ into $\RR^{n+1}$ such that
\begin{align*}
    F_{\p }(x,0)
    = F_M(x) + h_{\p } (x) \N_F(x)
    : = F_M(x)
    +\eta\pr{\frac{1}{\gamma_0e^{\tau_0}}f(x)}
    \sum_{i=1}^mp_if_i\cdot\N_F(x).
\end{align*}
We start the mean curvature flow from $F_{\p}(x,0)$. 
We denote the family of immersions of mean curvature flows by $\{F(x,t)\}_{t\in [0,T(\p )]}$ and transplant it back to the original shrinker as 
\begin{align*}
\hat{F}_{\p }(x,t)
= \eta\pr{\frac{1}{\Gamma_0e^{\tau_0}}f(x)} F_{\p }(x,t)
+\pr{1-\eta\pr{\frac{1}{\Gamma_0e^{\tau_0}}f(x)}}F(x,t),
\end{align*}
where $F(x,t)$ corresponds to the family of immersions of mean curvature flows of the shrinker $\Sigma$.
From the discussion in Section \ref{evolu}, we can take $F_0=\hat{F}_{\p }$. 
Then we get 
\begin{align*}
    \frac{1}{\sqrt{1-t}}\hat{F}_{\p }(\varphi_t(x),t)=F(x)+h_{\p }(x,t)\N_F(x) 
\end{align*}
and
\begin{align*}
  & d\hat{F}_{\p }(\partial_t\varphi_t(x))
  = \frac{\ppair{(\partial_t\hat{F}_{\p })(\varphi_t(x),t)+\frac{1}{2\sqrt{1-t}} F(x),\N_{\hat{F}_{\p }}(\varphi_t(x))}}
  {\ppair{\N_F(x),\N_{\hat{F}_{\p }}(\varphi_t(x))}}
  \N_F(x) - \pr{(\partial_t\hat{F}_{\p })(\varphi_t(x),t)+\frac{1}{2\sqrt{1-t}} F(x)}.
\end{align*}
In particular, we have 
\begin{align*}
    \abs{d\hat{F}_{\p }(\partial_t\varphi_t(x))}
    \leq \frac{\abs{(\partial_t\hat{F}_{\p })(\varphi_t(x),t)+\frac{1}{2\sqrt{1-t}} F(x)}}
    {\abs{\ppair{\N_F(x),\N_{\hat{F}_{\p }}(\varphi_t(x))}}}.
\end{align*}
From \eqref{h-equ}, the function $h=h_{\p }$ satisfies the equation 
\begin{align*}
    \frac{\partial h}{\partial \tau}
    =&Lh+\frac{{\sqrt{1-t}}\la (\partial_t\hat{F}_{\p }-\Delta^{g_0}\hat{F}_{\p })(\varphi_t(x),t),\N_{F_1}(x)\ra}{\la\N_F(x),\N_{F_1}(x)\ra}\notag\\
    &+C_3(F,\na F,h,\na h)* h+C_4(F,\na F,h,\na h)*\na h+C_5(F,\na F,h,\na h)*\na^2 h\\  =&Lh+\mathcal{Q}_1+\mathcal{Q}_2,
\end{align*}
for $\tau=-\log(1-t),$ where
\begin{align*}
    \mathcal{Q}_1=&{\sqrt{1-t}}\la (\partial_t\hat{F}_{\p }-\Delta^{g_0}\hat{F}_{\p })(\varphi_t(x),t),\N_{F_1}(x)\ra,\text{ and}\\ 
    \mathcal{Q}_2=&C_3(F,\na F,h,\na h)*h+C_4(F,\na F,h,\na h)*\na h+C_5(F,\na F,h,\na h)*\na^2 h\notag\\&+{\sqrt{1-t}}
    \ppair{(\partial_t\hat{F}_{\p }-\Delta^{g_0}\hat{F}_{\p })(\varphi_t(x),t),\N_{F_1}(x)} 
    \pr{\frac{1}{{\la\N_F(x),\N_{F_1}(x)\ra}} - 1}.   
\end{align*}
In normal coordinates, the second order term in $\mathcal{Q}_2$ corresponds to 
\begin{align*}
    \frac{\tilde{h}^{ab}\pr{1-|\nabla h|^2} - \pr{|\nabla h|^2+\tilde{h}^{cd}\partial_ch\,\partial_dh} g^{ab}}
    {\pr{1 + \ppair{\N_F,\td \N}}^2}\partial_a\partial_bh\cdot\N_F.
\end{align*}

Given any $h\in L^2_{W}(\Sigma)$, we denote the stable and unstable projections by
\begin{align*}
    h_u &:= \sum_{j=1}^m\pair{h,h_j}_{L^2_W}h_j,\text{ and}\\
    h_s &:= \sum_{j=m+1}^\infty \pair{h,h_j}_{L^2_W} h_j=h-h_u.
\end{align*}
Similar to the Ricci flow case \cite{Sto}, we define the box 
\begin{align*}
    \mathcal{B}[\lambda_*,\mu_u,\mu_s,\ep_0,\ep_1,\ep_2,t_1]
\end{align*}
to be the set which consists of all the functions $h\colon \Sigma\times [0,-\log(1-t_1)]\to \RR$ satisfying the following conditions.\\
(1) $h(\cdot,\tau)$ is compactly supported in $\Sigma$ for any $\tau\in [0,-\log(1-t_1)].$\\
(2) For $\tau\in [0,-\log(1-t_1)],$
\begin{align*}
        \|h_u(\cdot,\tau)\|_{L^2_W}
        &\leq \mu_ue^{-\lambda_*(\tau+\tau_0)},
        \text{ and}\\
        \|h_s(\cdot,\tau)\|_{L^2_W}
        &\leq \mu_se^{-\lambda_*{(\tau+\tau_0)}}.
\end{align*}
(3) For $\tau\in [0,-\log(1-t_1)],$
\begin{align*}
    |h|_{\bar{g}}(x)&\leq \ep_0\tilde{r},\\
    |\na h|_{\bar{g}}&\leq \ep_1, \text{ and}\\
    |\na^2 h|_{\bar{g}}&\leq \ep_2\tilde{r}^{-1}.
\end{align*}
where $\tilde{r}$ is a fixed smooth function such that $\tilde{r}\geq 1$ and $\tilde{r}=|x|$ for $|x|\geq 2$. 
Our main goal here is to prove that for $\gamma_0,|\p|<<1$ and $\Gamma_0, \tau_0>>1$, $h_{\p}$ can only leave $\mathcal{B}$ from the unstable side.

\begin{Pro}[For the discussion in Ricci flow settings, see {\cite[Section 8]{Sto}}]\label{exit}
    Given $\gamma_0,|\p |\leq \bar{p}e^{-\lambda_*\tau_0}<<1$ and $\Gamma_0, \tau_0>>1$, for $h_{\p}$ constructed above, assume $h_{\p }\notin \mathcal{B}[\lambda_*,\mu_u,\mu_s,\ep_0,\ep_1,\ep_2,1].$ 
    If we denote the largest existence time for $F_{\p }(x,\cdot)$ by $T(\p)$ and take the largest time $t(\p)$ such that 
    $h_{\p }\in \mathcal{B} = \mathcal{B}[\lambda_*,\mu_u,\mu_s,\ep_0,\ep_1,\ep_2,t(\p )]$, 
    then we have 
    \begin{align*}
        \|h_{\p ,u}(\cdot,-\log(1-t(\p )))\|_{L^2_W}
        = \mu_u e^{-\lambda_*(\tau_0-\log(1-t(\p )))}. 
    \end{align*}
\end{Pro}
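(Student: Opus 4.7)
The plan is to use continuity together with a strict improvement argument to rule out every constraint in the definition of $\mathcal B$ except the unstable one. By the maximality of $t(\p)$ and continuity of $h_\p$ in $\tau$, at least one of the defining inequalities of $\mathcal B[\lambda_*,\mu_u,\mu_s,\ep_0,\ep_1,\ep_2,t(\p)]$ must be saturated at $\tau_*:=-\log(1-t(\p))$. My goal is to show that none of the inequalities other than $\|h_{\p,u}\|_{L^2_W}\leq \mu_u e^{\lambda_*(\tau+\tau_0)}$ can be tight at $\tau_*$, which forces that one to be.

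First, I would rule out the three pointwise bounds by invoking Theorem~\ref{impro}. The initial data $h_\p(\cdot,0)=\eta(\tfrac{1}{\gamma_0 e^{\tau_0}}f)\sum_{i=1}^m p_i f_i$ has the required form with $|\p|\leq \bar p e^{\lambda_*\tau_0}$, the $C^1$ smallness holds since $h_\p\in\mathcal B$, and the $L^2_W$ control $\|h_\p(\tau)\|_{L^2_W}\leq(\mu_u+\mu_s)e^{\lambda_*(\tau+\tau_0)}$ is immediate. The support and size conditions on $\mathcal Q_1$ follow from the construction of $\hat F_\p$ in Section~\ref{double} and the outer cutoff built into $F_0$: $\hat F_\p$ agrees with $\Sigma$ outside a ball of radius $\sim \Gamma_0 e^{\tau_0/2}$, so $\mathcal Q_1$ is supported in the annular gluing region. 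Theorem~\ref{impro} then yields
\begin{align*}
    |h_\p|+|\na h_\p|+|\na^2 h_\p|\leq W'\delta\quad\text{on }\Sigma\times[0,\tau_*],
\end{align*}
where $\delta$ can be taken as small as we like by choosing $\gamma_0,|\p|,\mu_u+\mu_s$ small and $\Gamma_0,\tau_0$ large. Choosing $\delta$ with $W'\delta<\min(\ep_0,\ep_1,\ep_2)$ ensures that no pointwise inequality can be tight at $\tau_*$.

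Next, I would rule out the stable $L^2_W$ bound using the spectral gap $\lambda_{m+1}<\lambda_*$. Testing \eqref{h-equ} against $h_{\p,s}$ in the Gaussian inner product and using self-adjointness of $L$ together with the fact that $L$ restricted to the stable subspace has spectrum bounded by $\lambda_{m+1}$ on the appropriate side, I get
\begin{align*}
    \frac{d}{d\tau}\|h_{\p,s}\|_{L^2_W}^2
    \leq 2\lambda_{m+1}\|h_{\p,s}\|_{L^2_W}^2
    + 2\|h_{\p,s}\|_{L^2_W}\pr{\|\mathcal Q_1\|_{L^2_W}+\|\mathcal Q_2\|_{L^2_W}}.
\end{align*}
The pointwise improvement from the previous step gives $\|\mathcal Q_2\|_{L^2_W}\leq CW'\delta\,\|h_\p\|_{L^2_W}$, while $\|\mathcal Q_1\|_{L^2_W}$ is controlled using the bounds on $|\mathcal Q_1|$ from Theorem~\ref{impro}. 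The initial stable projection $\|h_{\p,s}(\cdot,0)\|_{L^2_W}$ comes entirely from the commutator of the spectral projection with the cutoff $\eta(\tfrac{1}{\gamma_0 e^{\tau_0}}f)$; by Theorem~\ref{thm:eigen-decay} the eigenfunctions $f_1,\dots,f_m$ have polynomial growth strictly slower than $e^{|F|^2/8}$, so localizing beyond $\{f\gtrsim \gamma_0 e^{\tau_0}\}$ yields $\|h_{\p,s}(\cdot,0)\|_{L^2_W}\ll \bar p e^{\lambda_*\tau_0}$. Integrating the differential inequality yields
\begin{align*}
    \|h_{\p,s}(\cdot,\tau)\|_{L^2_W}
    \leq C e^{\lambda_{m+1}(\tau+\tau_0)}+(\text{small error})
    <\mu_s e^{\lambda_*(\tau+\tau_0)}
\end{align*}
throughout $[0,\tau_*]$ for $\tau_0$ large. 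Combined with Step~1, this forces the unstable inequality to be the unique saturated one at $\tau_*$, which is the conclusion.

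The main obstacle is handling the nonlinear term $\mathcal Q_2$ in the stable-mode energy estimate: it couples stable and unstable components through $h$, $\na h$, $\na^2 h$ and must be absorbed without spoiling the spectral gap $\lambda_*-\lambda_{m+1}>0$. This is precisely where the pointwise $C^2$ improvement of Step~1 is essential—it provides the smallness of the coefficients in $\mathcal Q_2$ that allows absorption into the $2\lambda_{m+1}\|h_{\p,s}\|_{L^2_W}^2$ term with room to spare. A secondary technicality is ensuring the initial stable projection of the cutoff-eigenfunction perturbation is small enough; this is where the polynomial decay bound on $L$-eigenfunctions (Theorem~\ref{thm:eigen-decay}) and the smallness of $\gamma_0$ combine to give the needed estimate.
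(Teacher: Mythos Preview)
Your argument is correct and follows essentially the same route as the paper: invoke Theorem~\ref{impro} to strictly improve the $C^2$ bounds, then use the spectral gap together with an $L^2_W$ energy inequality to show the stable bound is strict, leaving only the unstable constraint to be saturated. One point to tighten: your estimate $\|\mathcal Q_2\|_{L^2_W}\le CW'\delta\,\|h_{\p}\|_{L^2_W}$ does not follow from the unweighted bound $\|h\|_{C^2}\le W'\delta$ alone, since $\mathcal Q_2$ contains $\na^2 h$ and the box gives no $H^2_W$ control; the paper instead uses the \emph{weighted} conclusion of Theorem~\ref{impro}, namely $|h|+|\na h|+|\na^2 h|\le W\bigl(\tfrac{|F|^2}{4}\bigr)^{|\lambda_*|}e^{\lambda_*(\tau+\tau_0)}$, together with the quadratic structure of $\mathcal Q_2$ to obtain $\|\mathcal Q_1\|_{L^2_W}+\|\mathcal Q_2\|_{L^2_W}\le Ce^{2\lambda_*(\tau+\tau_0)}$, which makes the ODE comparison immediate.
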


\begin{proof}
From the standard ordinary differential equation and parabolic theories, for $|\p |<<1$, we can get $t(\p )>0$.
From the interior estimate and the pseudolocality in \cite{EH, INS}, on 
$M\setminus \set{f\leq \frac{\Gamma_0e^{\tau_0}}{2}}$, 
$F_{\p }(x,t)$ satisfies 
$|\na^mA|\leq C_m(\frac{1}{\Gamma_0e^{\tau_0}})^{\frac{m+1}{2}}$ 
for $t\in [0,2]$. 
Thus, if $h_{\p }\in \mathcal{B}$, we have  \begin{align*}
    |(\partial_t\hat{F}_{\p }-\Delta^{g_0}\hat{F}_{\p })|(x,t)
    &\leq  C\pr{\frac{1}{\Gamma_0e^{\tau_0}}}^{\frac{1}{2}},\text{ and}\\
    |\na (\partial_t\hat{F}_{\p }-\Delta^{g_0}\hat{F}_{\p })|(x,t)
    &\leq  C\pr{\frac{1}{\Gamma_0e^{\tau_0}}}.
\end{align*} 
Hence, if we denote $\tau=-\log(1-t)$, we have
\begin{align*}
    \supp(\mathcal{Q}_1(\cdot,\tau))
    &=\supp\pr{{\sqrt{1-t}} \ppair{(\partial_t\hat{F}_{\p}-\Delta^{g_0}\hat{F}_{\p})(\varphi_t(\cdot),t),\N_{F_1}(\cdot)}}
    \subseteq
    \set{\frac{\Gamma_0e^{(\tau+\tau_0)}}{2}\leq f\leq \Gamma_0e^{(\tau+\tau_0)}},\\
    |\mathcal{Q}_1(\cdot,t)|
    &\leq  C\pr{\frac{1}{\Gamma_0e^{\tau+\tau_0}}}^{\frac{1}{2}},
    \text{ and }
    |\na \mathcal{Q}_1(\cdot,t)|\leq  C\pr{\frac{1}{\Gamma_0e^{\tau+\tau_0}}}.
\end{align*}
From Theorem \ref{impro}, given $\tau_0>>1,$ we can improve the $C^2$ bounds such that \begin{align*}
    \|h\|_{C^2(\Sigma\times[0,\tau_1])}
    &\leq W'\delta
    < \frac{1}{2}\min\{\ep_1,\ep_2,\ep_3\},\text{ and}\\
    \|h\|_{C^2(\Sigma\times[0,\tau_1])}
    &\leq W\pr{\frac{|F|^2}{4}}^{\lambda_*} e^{-\lambda_*(\tau+\tau_0)}.
\end{align*}
From the equation, given $|h|_{C^2}<\ep_0(n)$, \begin{align*}
    |\mathcal{Q}_2|\leq C \pr{|h|+|\na h|+|\na^2 h|}^2 \leq C
    \min \set{W\pr{\frac{|F|^2}{4}}^{\lambda_*} e^{-\lambda_*(\tau+\tau_0)},\ep_1,\ep_2,\ep_3}.
\end{align*}
Thus, given $\tau_0>>1$, 
\begin{align}\label{error}
    \|\mathcal{Q}_1\|_{L^2_W} + \|\mathcal{Q}_2\|_{L^2_W}
    \leq CW^2e^{-2\lambda_*(\tau+\tau_0)}\leq Ce^{-2\lambda_*(\tau+\tau_0)}.
\end{align} 
From the definition of $h_{\p }$, the polynomial decay of eigenfunctions (Theorem~\ref{thm:eigen-decay}), and the exponential decay of the weight,
for $|\p |\leq\bar{p}e^{-\lambda_*\tau_0}$, we have
\begin{align}\label{initial}
    \abs{\la h_{\p }(\cdot,0),h_j\ra_{L^2_W} -p_j}
    &\leq Ce^{-\frac{\gamma_0}{100}e^{\tau_0}} \text{ for }j\leq m,\text{ and}\\
    \|h_{\p ,s}\|_{L^2_W}
    &\leq Ce^{-\frac{\gamma_0}{100}e^{\tau_0}}.\nonumber
\end{align}
From \eqref{error} and the equation $\partial_\tau h=Lh+\mathcal{Q}_1+\mathcal{Q}_2$, 
\begin{align}\label{unstable}
    \frac{d}{d\tau}\|e^{\lambda_*(\tau+\tau_0)} h_{\p ,u}\|_{L^2_W}
    &\geq (\lambda_*-\lambda_m)\|e^{\lambda_*(\tau+\tau_0)} h_{\p ,u}\|_{L^2_W} -Ce^{-\lambda_*\tau_0},\text{ and}\\
     \frac{d}{d\tau}\|e^{\lambda_*(\tau+\tau_0)} h_{\p ,s}\|_{L^2_W}
     &\leq (\lambda_*-\lambda_{m+1})\|e^{\lambda_*(\tau+\tau_0)} h_{\p ,u}\|_{L^2_W} +Ce^{-\lambda_*\tau_0}. \nonumber
\end{align}
In particular, given 
$\tau_0>C(n,\mu_s,\mu_u),$ 
\begin{align*}
    \|h_{\p ,s}(\cdot,-\log(1-t(\p )))\|_{L^2_W}
    < \mu_se^{-\lambda_*(\tau_0-\log(1-t(\p )))}, 
\end{align*}
and thus, from the definition of the box, we obtain
\begin{align*}
        \|h_{\p ,u}(\cdot,-\log(1-t(\p )))\|_{L^2_W}= \mu_ue^{-\lambda_*(\tau_0-\log(1-t(\p )))} .
    \end{align*}
Moreover, from \eqref{unstable}, for $\tau>-\log(1-t(\p ))$,
\begin{align*}
    \|h_{\p ,u}(\cdot,\tau)\|_{L^2_W}> \mu_ue^{-\lambda_*(\tau+\tau_0)}.
\end{align*}
This completes the proof.
\end{proof}

Next, we prove that we can choose a suitable perturbation such that the flow remains in the box until time $t=1$.
This is based on a topological argument.

\begin{Pro}[For the discussion in Ricci flow settings, see {\cite[Section 8]{Sto}}]\label{perturbation}
    Given $\gamma_0,$ $\bar{p}e^{-\lambda_*\tau_0}<<1$ and $\Gamma_0, \tau_0>>1$,  
    there exists $\p \in \overline{B_{\bar{p}e^{-\lambda_*\tau_0}}}$ such that 
    \begin{align}
        h_{\p }\in \mathcal{B}[\lambda_*,\mu_u,\mu_s,\ep_0,\ep_1,\ep_2,1].
    \end{align}
\end{Pro}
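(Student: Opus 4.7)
The plan is a topological Wa\.zewski-type shooting argument, in the spirit of \cite[Section 8]{Sto}. I would argue by contradiction: suppose that for every $\p\in\overline{B_{\bar{p}e^{\lambda_*\tau_0}}}\subset\RR^m$, the flow $h_{\p}$ fails to remain in $\mathcal{B}$ all the way up to $t=1$. For each such $\p$, define the exit time $t(\p):=\sup\set{t\in[0,1]:\, h_{\p}\in\mathcal{B}[\lambda_*,\mu_u,\mu_s,\ep_0,\ep_1,\ep_2,t]}$ and $\tau(\p):=-\log(1-t(\p))$, with the convention $t(\p)=0$ when the initial data already lies outside $\mathcal{B}$. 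The goal is to manufacture out of the family of exit points a continuous retraction $B^m\to S^{m-1}$, which is impossible.

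The first step is to identify the exit face. By Theorem~\ref{impro} together with the bound \eqref{error} on $\mathcal{Q}_1+\mathcal{Q}_2$, the $C^2$ bounds $\ep_0,\ep_1,\ep_2$ are strictly improved along the flow; the stable part of \eqref{unstable} combined with \eqref{initial} gives $\|h_{\p,s}(\tau)\|_{L^2_W}<\mu_s e^{\lambda_*(\tau+\tau_0)}$ strictly. Hence only the unstable face can be the first to fail, and Proposition~\ref{exit} yields $\|h_{\p,u}(\tau(\p))\|_{L^2_W}=\mu_u e^{\lambda_*(\tau(\p)+\tau_0)}$ whenever $0<t(\p)<1$. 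Moreover, the unstable estimate in \eqref{unstable} delivers strict transversal crossing,
\[
    \frac{d}{d\tau}\bigl(e^{-\lambda_*(\tau+\tau_0)}\|h_{\p,u}(\tau)\|_{L^2_W}\bigr)\Big|_{\tau=\tau(\p)}\ge(\lambda_m-\lambda_*)\mu_u-Ce^{\lambda_*\tau_0}>0
\]
once $\tau_0$ is large. Combined with continuous dependence of $\p\mapsto h_{\p}$ in $C^0([0,\tau];L^2_W\cap C^2(\Sigma))$ for the quasilinear parabolic equation \eqref{h-equ} (available on bounded parameter sets from Propositions~\ref{GHolder}--\ref{hHolder}), this transversality implies that $\p\mapsto t(\p)$ is continuous.

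Next, identify the unstable subspace of $L^2_W(\Sigma)$ with $\RR^m$ via the orthonormal basis $\set{h_j}_{j=1}^m$ and define
\[
    \Psi\colon\overline{B_{\bar{p}e^{\lambda_*\tau_0}}}\to S^{m-1},\qquad
    \Psi(\p):=\frac{\bigl(\pair{h_{\p}(\tau(\p)),h_j}_{L^2_W}\bigr)_{j=1}^m}{\ppair{\sum_{j=1}^m\abs{\pair{h_{\p}(\tau(\p)),h_j}_{L^2_W}}^2}^{1/2}},
\]
which is continuous by the previous step and well-defined since the denominator equals $\mu_u e^{\lambda_*(\tau(\p)+\tau_0)}>0$ at a genuine exit. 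Choose $\bar{p}=2\mu_u$, so $\bar{p}e^{\lambda_*\tau_0}\to 0$ as $\tau_0\to\infty$ and the smallness hypothesis of the statement is met. For $\p\in\partial B$, the initial estimate \eqref{initial} gives $\|h_{\p,u}(0)\|_{L^2_W}>\mu_u e^{\lambda_*\tau_0}$ for large $\tau_0$, forcing $t(\p)=0$ and
\[
    \Psi(\p)=\frac{\p}{\abs{\p}}+O\bigl(e^{-\gamma_0 e^{\tau_0}/100}\bigr).
\]
Hence $\Psi|_{\partial B}\colon S^{m-1}\to S^{m-1}$ is homotopic to the identity and has degree one, while any continuous map $B^m\to S^{m-1}$ must restrict to a null-homotopic map on $\partial B^m$. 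This contradiction produces a $\p\in\overline{B_{\bar{p}e^{\lambda_*\tau_0}}}$ with $h_{\p}\in\mathcal{B}[\lambda_*,\mu_u,\mu_s,\ep_0,\ep_1,\ep_2,1]$.

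The main obstacle I anticipate is the continuity of $t(\p)$: the transversal rate $(\lambda_m-\lambda_*)\mu_u$ must dominate the error term $Ce^{\lambda_*\tau_0}$ in \eqref{unstable} uniformly in $\p\in\overline{B_{\bar pe^{\lambda_*\tau_0}}}$, which requires carefully tracking the constants produced by Theorem~\ref{impro} and \eqref{error} when choosing $\tau_0$. A subsidiary technicality is the continuity of $\Psi$ at parameters with $\abs{\p}$ near the threshold $\mu_u e^{\lambda_*\tau_0}$, where $t(\p)$ may transition from zero to a small positive value; this is resolved by observing that over infinitesimal exit times the unstable direction $h_{\p,u}(\tau(\p))/\|h_{\p,u}(\tau(\p))\|_{L^2_W}$ differs from $h_{\p,u}(0)/\|h_{\p,u}(0)\|_{L^2_W}$ by a negligible amount.
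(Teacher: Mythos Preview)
Your proposal is correct and takes essentially the same approach as the paper: a topological contradiction built from the unstable projection at the exit time. The paper works with the unnormalized map $\mathcal{F}(\p)=\bigl(\pair{h_{\p}(t(\p)),h_j}_{L^2_W}\bigr)_{j\le m}$ into $\RR^m\setminus\{0\}$ and a straight-line homotopy to the identity on the annulus $\overline{B_{\bar{p}e^{\lambda_*\tau_0}}}\setminus\overline{B_{2\mu_u e^{\lambda_*\tau_0}}}$ (taking $\mu_u<\bar p/2$), whereas you normalize to $S^{m-1}$ and invoke the no-retraction theorem; the paper also leaves the continuity of $\p\mapsto t(\p)$ implicit where you justify it via transversal crossing, but these are cosmetic differences.
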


\begin{proof}
As in the Ricci flow case, we define $\mathcal{F}:\overline{B_{\bar{p}e^{-\lambda_*\tau_0}}}\to \RR^m$ by
\begin{align*}
    \mathcal{F}(\p) 
    = \pr{\ppair{h_{\p }(t(\p )),h_i}_{L^2_W}}_{i=1,2,\cdots,m}.
\end{align*}
If we take $\mu_u<\frac{\bar{p}}{2}$, from \eqref{initial},
the map
$$\mathcal{F}|_{\overline{B_{\bar{p}e^{-\lambda_*\tau_0}}} \setminus B_{2\mu_ue^{-\lambda_*\tau_0}}}
\colon \overline{B_{\bar{p}e^{-\lambda_*\tau_0}}}\setminus B_{2\mu_ue^{-\lambda_*\tau_0}}\to \RR^m\setminus\{0\}$$ 
is homotopic to the identity through 
$\mathcal{F}_s := s\mathcal{F}+(1-s){\rm Id}
\colon \overline{B_{\bar{p}e^{-\lambda_*\tau_0}}}\setminus B_{2\mu_ue^{-\lambda_*\tau_0}}\to \RR^m\setminus\{0\}$.

We claim that this shows the Proposition. Otherwise, if the proposition is false, then
\begin{align*}
        h_{\p }\notin \mathcal{B}[\lambda_*,\mu_u,\mu_s,\ep_0,\ep_1,\ep_2,1]
\end{align*}
 for all $\p \in \overline{B_{\bar{p}e^{-\lambda_*\tau_0}}}$. Thus Proposition~\ref{exit} implies that 
$|\mathcal{F}(\p)|>0$ for all $\p \in \overline{B_{\bar{p}e^{-\lambda_*\tau_0}}}$. 
Note that $\mathcal{F}|_{\overline{B_{\bar{p}e^{-\lambda_*\tau_0}}}}$ is null-homotopic to a constant map by shrinking $\overline{B_{\bar{p}e^{-\lambda_*\tau_0}}}$ to a point and compose it with $\mathcal{F}$. 
Here, we have that the whole homotopy lies in $\RR^m\setminus\{0\}$ as $|\mathcal{F}(\p)|>0$ for all $\p \in \overline{B_{\bar{p}e^{-\lambda_*\tau_0}}}$. 
This shows that, in particular, $$\mathcal{F}|_{\overline{B_{\bar{p}e^{-\lambda_*\tau_0}}}\setminus B_{2\mu_ue^{-\lambda_*\tau_0}}}\colon \overline{B_{\bar{p}e^{-\lambda_*\tau_0}}}\setminus B_{2\mu_ue^{-\lambda_*\tau_0}}\to \RR^m\setminus\{0\}$$ 
is null-homotopic.

Hence, we derive a contraction.
This proves that there exists $\p \in \overline{B_{\bar{p}e^{-\lambda_*\tau_0}}}$ such that 
\begin{align}
        h_{\p }\in \mathcal{B}[\lambda_*,\mu_u,\mu_s,\ep_0,\ep_1,\ep_2,1],
\end{align}
completing the proof.
\end{proof}

We can prove our main result (Theorem~\ref{main}).

\begin{Thm}
    Given an asymptotically conical shrinker $\Sigma^n\sbst \RR^{n+1}$, there exists a closed embedded hypersurface $M\sbst \RR^{n+1}$ such that the mean curvature flow $M_t$ starting from $M$ develops a type I singularity at time $1$ at the origin modeled on $\Sigma$.
\end{Thm}

\begin{proof}
    For any $\p,$ if we blow up the mean curvature flow starting from the corresponding perturbation at $({\bf 0},1)$, the family of rescaled hypersurfaces is the graph of $h_{\p }$ over $\Sigma$. 
    From Proposition~\ref{perturbation}, there exists a vector $\p$ such that $ h_{\p }\in \mathcal{B}[\lambda_*,\mu_u,\mu_s,\ep_0,\ep_1,\ep_2,1].$ 
    Based on Theorem~\ref{impro} and Proposition~\ref{hHolder}, $h_{\p}$ converges to $0$ in $C^\infty_{\rm loc}(\Sigma)$. 
    The global $C^2$ bound of $h_{\p}$ guarantees that the singularity is of type I. 
    This finishes the proof. 
\end{proof}

Using Theorem~\ref{main} and the results from \cite{AIV,CDS}, we can prove Corollary~\ref{cor:fatten}.

\begin{proof}[Proof of Corollary~\ref{cor:fatten}]
    Angenent--Ilmanen--Velázquez constructed $n$-dimensional asymptotically conical shrinkers such that level set flows starting from their asymptotic cones fatten for $n\in\set{3,4,5,6}$
    in \cite[Theorem~1]{AIV}.\footnote{The proof of Theorem~1 in \cite{AIV} was provided in Section~3 (for proving Theorem~5) and Section~5.1 there.
    }
    Take such a shrinker $\Sigma$ and let $\mathcal C$ be its asymptotic cone.
    Theorem~\ref{main} implies that there exists a smooth closed hypersurface $M\sbst\bb R^{n+1}$ such that the level set flow $M_t$ starting from $M$ develops a singularity at $({\bf 0},0)\in\bb R^{n+1}\times\bb R$ whose tangent flow is modeled on $\Sigma.$
    Thus, $M_0$ is a hypersurface with a singularity at the origin modeled on the cone $\mathcal C$ (cf. \cite[Corollary 1.2]{CS}).
    Since the level set flow starting from $\mathcal C$ fattens, the main result in \cite{CDS} implies that the level set flow $M_t$ fattens for positive time.
\end{proof}

\end{document}